\definecolor{plum}  {rgb}{.4,0,.4}
\definecolor{BrickRed} {rgb}{0.6,0,0}
\newlength{\minipagewidth}
\newcommand{\bookbox}[1]{
\par\medskip\noindent
\framebox[\textwidth]{
\begin{minipage}{\minipagewidth}
{#1}
\end{minipage} } \par\medskip }
\def\cB{{\mathcal B}}
\def\cF{{\mathcal F}}
\def\cP{{\mathcal P}}
\def\sX{{\mathsf X}}
\def\Reals{\mathbb{R}}
\def\Naturals{\mathbb{N}}
\def\E{\mathbb{E}}
\def\Prob{{\mathbb{P}}}
\def\1{\mathbf{1}}
\def\argmin{\operatornamewithlimits{arg\,min}}
\def\deq{\triangleq}
\def\ave#1{\langle #1 \rangle}
\def\Ave#1{\left\langle #1 \right\rangle}
\def\Ric{\operatorname{Ric}}
\def\d{{\mathrm d}}
\def\tZ{{\tilde{Z}}}
\def\edge#1#2{\{ #1,#2 \}}
\newtheorem{theorem}{Theorem}
\newtheorem{lemma}{Lemma}
\newtheorem{proposition}{Proposition}
\newtheorem{corollary}{Corollary}
\newtheorem{remark}{Remark}
\def\an#1{{#1}}
\def\anr#1{{#1}}
\def\mr#1{{#1}}
\begin{document}

\title{\bf Online Discrete Optimization in Social Networks\\
in the Presence of Knightian Uncertainty\thanks{Research supported in part by the National Science Foundation under grant no.~CCF-1017564, by the Office of Naval Research under grant no.~N00014-12-1-0998, and by the UIUC College of Engineering under Strategic Research Initiative on Cognitive and Algorithmic Decision-Making.}}

\author{Maxim Raginsky\thanks{Department of Electrical and Computer Engineering and the Coordinated Science Laboratory, University of Illinois at Urbana--Champaign, Urbana, IL 61801, USA. E-mail: maxim@illinois.edu.}
\and Angelia Nedi\'c\thanks{Department of Industrial and Enterprise Systems Engineering and the Coordinated Science Laboratory, University of Illinois at Urbana--Champaign, Urbana, IL 61801, USA. E-mail: angelia@illinois.edu.}}

\date{First version: July 1, 2013;\,\, revision: January 27, 2015}

\maketitle
\begin{abstract}
We study a model of collective real-time decision-making (or learning) in a social network operating in an uncertain environment, for which no a priori probabilistic model is available. Instead, the environment's impact on the agents in the network is seen through a sequence of cost functions, revealed to the agents in a causal manner only after all the relevant actions are taken. There are two kinds of costs: individual costs incurred by each agent and local-interaction costs incurred by each agent and its neighbors in the social network. Moreover, agents have inertia: each agent has a default mixed strategy that stays fixed regardless of the state of the environment, and must expend effort to deviate from this strategy in order to respond to cost signals coming from the environment. We construct a decentralized strategy, wherein each agent selects its action based only on the costs directly affecting it and on the decisions made by its neighbors in the network. In this setting, we quantify social learning in terms of regret, which is given by the difference between the realized network performance over a given time horizon and the best performance that could have been achieved in hindsight by a fictitious centralized entity with full knowledge of the environment's evolution. We show that our strategy achieves the regret that scales polylogarithmically with the time horizon and polynomially with the number of agents and the maximum number of neighbors of any agent in the social network.
\end{abstract}

\thispagestyle{empty}

\section{Introduction}

\subsection{Risk vs.\ uncertainty in social learning and optimization}

Decision-making and optimization based on information dispersed among a large number of agents are topics of significant current interest, from both theoretical and practical points of view. Existing literature, which is vast, covers a wide variety of models with different assumptions on the information structure, i.e., who is allowed to observe what, and on the agents' capabilities, i.e., what they are allowed to or able to do with their observations. For example, canonical models of Bayesian learning \cite{Chamley_book} assume complete and truthful sharing of all relevant information among all agents, who are also endowed with essentially unlimited computational power. There are also generalizations to noisy signals, but it is typically assumed that the source of noise is nonadversarial. A related framework of Bayesian dynamic games \cite{Harsanyi1,KalaiLehrer} considers sequential decisions by a large collection of agents, where each agent has perfect recall of all decisions made in the past (but not necessarily of all {\em information} used to arrive at those decisions).

Recently, however, emphasis has shifted towards decision-making in {\em social networks}, where information sharing is limited to small groups of agents --- e.g., when an individual is deciding whether to buy a particular product, she can directly observe similar decisions made by her friends, neighbors or coworkers. Thus, a social network can be modeled as a graph, where each vertex corresponds to an agent while edges correspond to pairwise  interactions between agents \cite{Jackson_book}. Most theoretical studies of social decision-making rest on the following basic framework \cite{Chamley_book,Jackson_book}: (i) there is some unknown parameter associated with the environment in which the network is situated; (ii) each agent receives a private signal stochastically related to this parameter; and (iii) agents select actions by aggregating their private signals with any information they receive from their neighbors in the social network. The main question is whether the agents can {\em learn} enough about this parameter of interest under the given information structure and the constraints on their information-processing capabilities. 
For instance, Acemoglu et al.~\cite{Acemoglu_etal_Bayesian_learning} consider Bayesian learning 
in dynamic social networks with randomly evolving neighborhoods, 
while Jadbabaie et al.~\cite{Jadbabaie_etal_nonBayesian_learning} examine a non-Bayesian model of learning in a fixed network, where agents form their beliefs about the underlying parameter by mixing Bayesian updates computed on the basis of their private information with the beliefs of their neighbors. 

There are several key modeling assumptions underlying these and similar works:
\begin{enumerate}
	\item[(S1)] The environment is static, meaning that the underlying parameter is drawn from a fixed probability distribution once and for all, and does not change throughout the learning process.
	\item[(S2)] Each agent has a coherent probabilistic model of the environment in the form of a joint probability measure on the Cartesian product of the parameter space and the agent's private signal space.
	\item[(S3)] The agents have no intrinsic goals or default strategies unrelated to the state of the environment. 
\end{enumerate}
In this paper, we introduce a model of discrete-time decision-making in social networks that departs from all three of these assumptions. In particular, \mr{on the descriptive level}, our setting has the following features:
\begin{enumerate}
	\item[(D1)] The environment is dynamic, and no agent has a model of its evolution. 
	\item[(D2)] In view of the item above, the environment does not admit a probabilistic representation. Instead, at each time step, each agent receives a signal that quantifies the {\em costs} of all possible actions that could be taken by this agent and its neighbors in the social network in the current state of the environment.
	\item[(D3)] No agent is compelled to take only those actions that would entail lower costs. Instead, each agent has a default mixed strategy that stays fixed regardless of the state of the environment, and must expend effort in order to deviate from this strategy.
\end{enumerate}
The distinction between the probabilistic (or Bayesian) view of the environment stipulated in S1--S2 and the nonprobabilistic view laid out in D1--D2 is along the same lines as the distinction between {\em risk} and {\em uncertainty} made in 1921 by Frank Knight \cite{Knight_book}. According to Knight, risk describes situations with outcomes modeled by random variables with known probability distributions, while uncertainty pertains to situations in which no such probabilistic description is available or even possible. For instance, uncertainty may arise due to the presence of boundedly rational agents with different sets of values, norms, and abilities. Despite the clear conceptual and practical significance of this distinction, there has been little effort in economics to formalize it mathematically. One of the few exceptions is the work of Bewley \cite{Bewley_Knightian1,Bewley_Knightian2}, who studies the behavior of a decision-making agent interacting in real time with an environment in a state of {\em punctuated equilibrium} --- i.e., intervals of (relative) stability are interrupted by ``shocks,'' corresponding to sharp and unpredictable changes. The Knightian aspect is embodied in the premise that the agent is unable to anticipate the frequency and the nature of these shocks in advance, and so may be caught by surprise. An ideal Bayesian risk-minimizing agent, on the other hand, is not really surprised by anything, since by definition it has already assigned subjective beliefs and utilities to all possible contingencies. Moreover, a Knightian agent may exhibit {\em inertia}, i.e., a tendency to stick to some default strategy unless there is a sufficiently strong signal from the environment compelling the agent to deviate from the status quo.

Thus, we are interested in the collective decision-making (or learning) capabilities of social networks in the presence of Knightian uncertainty, as captured by the assumptions D1--D3. We quantify learning in terms of {\em regret}, i.e., the difference between the realized performance of the network over a given time horizon and the best performance that could have been achieved in hindsight by a fictitious centralized entity with full knowledge of the environment's evolution. The performance criterion is induced by a time-varying sequence of composite objective functions that incorporate the total cost of actions taken by all the agents and the total effort expended by the agents in deviating from their individual default strategies. 

\subsection{A sketch of the model and a summary of results}

Let us give a more formal description. We start by considering a {\em single} agent who must choose an action from a finite set of alternatives, while attempting to balance the instantaneous cost of that action against a desire to minimize effort by sticking to some default (or \textit{status quo}) behavior. Mathematically, we may model such an agent as follows. Let $\sX$ denote the set of all possible actions, and let $\mu_0$ be a fixed probability distribution on $\sX$, where for each $x \in \sX$ we interpret $\mu_0(x)$ as the default probability that the agent will choose action $x$. (For instance, we may imagine a large population of similar agents and take $\mu_0(x)$ as the fraction of agents that tend to choose action $x$ by default.) Without loss of generality, we may suppose that $\mu_0(x) > 0$ for all $x \in \sX$. Now let $f \colon \sX \to \Reals$ be a function that prescribes the cost of each action. If we allow the agent to randomize, then a reasonable strategy for the agent would be to choose a random action according to
\begin{align*}
	\pi = \argmin_{\nu \in \cP(\sX)} \left\{ \beta \ave{\nu,f} + D(\nu \| \mu_0) \right\},
\end{align*}
where $\cP(\sX)$ is the space of all probability distributions on $\sX$,
\begin{align*}
	\ave{\nu,f} \deq \sum_{x \in \sX}\nu(x)f(x)
\end{align*}
is the expected cost of a random action sampled from the set $\sX$ according to $\nu \in \cP(\sX)$,
\begin{align*}
	D(\nu \| \mu_0) \deq \sum_{x \in \sX} \nu(x)\ln \frac{\nu(x)}{\mu_0(x)}
\end{align*}
is the {\em relative entropy} (or {\em Kullback--Leibler divergence}\footnote{The Kullback--Leibler divergence is a commonly used measure of (dis)similarity between probability distributions; we discuss some of its salient properties in Section~\ref{ssec:notation}.}) between $\nu$ and $\mu_0$ \cite{CovTho06}, and $\beta > 0$ is a parameter that controls the trade-off between loss aversion (i.e., the desire to minimize expected cost) and inertia (i.e., the desire to stick to default behavior) of a Knightian decision-maker \cite{Bewley_Knightian1}. Thus, the addition of the relative-entropy term is an analytically tractable means of penalizing excessive deviations from the default strategy. A simple argument based on the method of Lagrange multipliers gives an explicit form of the solution $\pi$:
\begin{align}\label{eq:logit}
	\pi(x) = \frac{\mu_0(x)\exp\left(-\beta f(x)\right)}{Z(\beta)},
\end{align}
where $Z(\beta) = \Ave{\mu_0,\exp(-\beta f)}$ is a normalization factor. This strategy is well-known in econometrics under the name of {\em multinomial logit choice model} \cite{McFadden_logit}, and it also plays a prominent role in the context of distributionally robust optimization \cite{HansenSargent_book}. Probability distributions of this form are also well-known in statistical physics under the name of {\em Gibbs measures} (see Section~\ref{ssec:notation} for more details), where $f$ plays the role of an energy function and $\beta$ is the {\em inverse temperature}. Note, in particular, the two extreme regimes: when $\beta = 0$ (infinite temperature), the cost $f$ has no influence on the agent, and we have $\pi = \mu_0$; on the other hand, as $\beta \to \infty$ (zero temperature), the agent has no inertia, and $\pi$ will converge to the uniform distribution supported on the set of minimizers of $f$.

In the above formulation, the agent knows the cost function $f$ and thus has no uncertainty about the consequences of various actions; we have only captured the agent's inertia by means of the relative-entropy term. Now, let us bring in an element of time and consider a boundedly rational agent operating in a dynamic environment. Bounded rationality comes from the fact that the agent is unable (or unwilling) to construct an intelligible model of its environment, in the spirit of Knightian uncertainty. The agent must take a sequence of random actions $X_1,\ldots,X_T \in \sX$ at discrete time steps $t=1,2,\ldots,T$. We suppose also that, at each time $t$, the costs of each action change unpredictably, and the agent only finds out the current cost function $f_t : \sX \to \Reals$ after having taken the action $X_t$.  However, the agent keeps track of all past cost functions $f_1,\ldots,f_{t-1}$, and may use this information when choosing $X_t$. We assume that the environment is {\em nonreactive}, i.e., the sequence $f_1,\ldots,f_T$ of instantaneous cost functions is fixed in advance. Finally, we assume that the default distribution 
$\mu_0$ over the action set $\sX$ does not change.

More formally, let $\pi_t \in \cP(\sX)$ denote the distribution of $X_t$ chosen by the agent based on all available information at time $t$. Then, the {\em instantaneous loss} incurred by the agent at time $t$ 
is given by
\begin{align}\label{eq:loss}
	\ell_t(\pi_t) \deq \beta \ave{\pi_t,f_t} + D(\pi_t \| \mu_0).
\end{align}
Due to the agent's limited forecasting ability, we adopt a backward-looking optimality criterion based on {\em worst-case regret}: If the cost functions $f_t$ are chosen from some fixed class $\cF$ known to the agent, the agent should choose a strategy (i.e., a rule for mapping all available information at each time $t$ to a probability distribution $\pi_t$ of $X_t$) so as to minimize the worst-case regret
\begin{align}\label{eq:regret}
 R_T(\cF) \deq \sup_{f_1,\ldots,f_T \in \cF} R_T(f^T),
\end{align}
where
\begin{align}\label{eq:regret_fT}
	R_T(f^T) \deq \sum^T_{t=1} \ell_t(\pi_t) - \inf_{\nu \in \cP(\sX)}\sum^T_{t=1}\ell_t(\nu)
\end{align}
is the regret with respect to a fixed sequence $f^T = (f_1,\ldots,f_T)$ of instantaneous costs. The regret quantifies the worst-case gap between the cumulative loss after $T$ time steps and the smallest cumulative loss that could have been achieved in hindsight had the agent been aware of the entire sequence $f^T=(f_1,\ldots,f_T)$ of instantaneous costs ahead of time. 
Indeed, if we normalize both sides of \eqref{eq:regret_fT} by $\beta T$, then the minimum of the per-round regret
\begin{align}\label{eq:per_round}
	\bar{R}_{\beta,T}(\cF) &= \sup_{f_1,\ldots,f_T \in \cF} \left\{\frac{1}{T}\left[\sum^T_{t=1} \ave{\pi_t,f_t} + \frac{1}{\beta} D(\pi_t \| \mu_0)\right] - \inf_{\nu \in \cP(\sX)} \left[\Ave{\nu, \frac{1}{T}\sum^T_{t=1}f_t} + \frac{1}{\beta} D(\nu \| \mu_0)\right]\right\}.
\end{align}
over all strategies for the agent quantifies the smallest gap the agent can secure in the worst case between (a) the average of the expected losses at each round without any knowledge of what will happen in future rounds and (b) the minimum expected loss the agent could attain in a single round against the \textit{empirical average} $(1/T) \sum^T_{t=1} f_t$ of the instantaneous costs. Since the agent does not make any probabilistic assumptions about the evolution of the cost sequence $f_1,\ldots,f_T$, the empirical average of this sequence is a reasonable proxy for the ``typical'' behavior of the environment. Moreover, from the results of Abernethy et al.~\cite{Abernethy09} it follows that, under certain mild conditions allowing the use of the minimax theorem, the per-round minimax regret admits the following equivalent characterization:
\begin{align}
	\bar{R}^*_{\beta,T}(\cF) &= \inf_{\text{strategies}} \bar{R}_{\beta,T}(\cF) \nonumber \\
	&= \frac{1}{T}\sup_{\tilde{\mu} \in \cP(\cF^T)} \E_{f^T \sim \tilde{\mu}} \left\{ \sum^T_{t=1} \inf_{\nu \in \cP(\sX)} \E\left[\ave{\nu,f_t} + \frac{1}{\beta}D(\nu \| \mu_0)\Bigg|f^{t-1}\right] - \inf_{\nu \in \cP(\sX)} \sum^T_{t=1} \left[\ave{\nu,f_t} + \frac{1}{\beta}D(\nu \| \mu_0)\right] \right\}, \label{eq:minimax_swap}
\end{align}
where $\cP(\cF^T)$ is the space of all probability measures on $T$-tuples over $\cF$ (with respect to a suitable $\sigma$-algebra). This characterization shows that regret minimization is a sequential (or dynamic) generalization of robust Bayesian optimization \cite{HansenSargent_book} that takes into account the fact that the agent does accumulate some information about the environment and may use it to some extent to compensate for future uncertainty about the instantaneous cost functions. Indeed, for a fixed $\tilde{\mu} \in \cP(\cF^T)$, the term corresponding to time $t$ in the first summation in \eqref{eq:minimax_swap} corresponds to the selection of the best strategy $\pi_t$ when the next instantaneous cost $f_t$ would be drawn from the posterior distribution $\tilde{\mu}(\cdot|f^{t-1})$. 

Online decision or prediction problems of this sort have received a great deal of attention in such fields as machine learning, operations research, and finance \cite{FosterVohra,CesLug06,Abernethy_etal_BlackScholes,Abernethy_markets}. Their origins date back to a seminal paper of Hannan \cite{Hannan}, who has shown that an agent making repeated decisions in a dynamic and uncertain environment will eventually ``learn'' to act almost as well as if it were aware of the sequence of environment states before beginning to act. To fix ideas, us give an illustrative example in the context of discrete optimization: the \textit{online shortest path} problem \cite{TakimotoWarmuth,KalaiVempala_online}. Let $G = (V,E)$ be a directed acyclic graph with two distinguished vertices $a$ and $b$. Let the action space $\sX$ be the set of all (directed) paths from $a$ to $b$; it can be identified with a subset of $\{0,1\}^E$, each of whose elements is a tuple $x = (x_e)_{e \in E}$, such that $x_e = 0$ or $1$ depending on whether the edge $e$ is included in the path. The amount of traffic on each edge $e \in E$ varies with time arbitrarily. If we denote the traffic on edge $e$ at time $t$ by $d_{t,e}$, then the total traffic along a given path $x \in \sX$ is given by
\begin{align}\label{eq:delay}
	f_t(x) = \sum_{e \in E} d_{t,e} x_{e}.
\end{align}
At each time $t$, the agent picks a probability distribution $\pi_t$ over paths from $a$ to $b$, takes a random path $X_t \sim \pi_t$, and experiences the average traffic of
\begin{align*}
	\ave{\pi_t,f_t} &=  \sum_{x \in \sX} \pi_t(x) \sum_{e \in E} d_{t,e} x_e.
\end{align*}
Let $\cF$ be the class of all functions of the form \eqref{eq:delay}. Let us first consider the case of no inertia ($\beta \to \infty$). Then it can be shown (see, e.g., \cite{TakimotoWarmuth} or \cite[Sec.~5.4]{CesLug06}) that the optimal strategy at time $t$ is given by
\begin{align*}
	\pi_t(x) &\propto \exp\left(-\eta \sum^{t-1}_{s=1} \sum_{e \in E} d_{s,e}x_e\right),
\end{align*}
where the parameter $\eta$ is given by ${\rm const} \cdot \sqrt{\frac{\ln |\sX|}{T}}$, resulting in $O\left(\sqrt{T\ln|\sX|}\right)$ total regret or $O\left(\sqrt{\ln |\sX|/T}\right)$ per-round regret. Note the intuitive structure of this strategy: it favors (i.e., assigns higher probabilities to) the paths consisting of edges that have experienced the smallest total traffic before time $t$. Moreover, because of the additive structure of the costs in \eqref{eq:delay}, the per-round computational cost of implementing such a strategy is $O(|E|)$ \cite{TakimotoWarmuth}. The per-round regret has an appealing interpretation as the difference between the total per-round traffic experienced by the agent over $T$ rounds and the average traffic along the best single path the agent could have chosen in hindsight knowing the average amount of traffic $(1/T)\sum^T_{t=1}d_{t,e}$ on each edge $e \in E$. If the agent has a finite inertia parameter $\beta > 0$ and a default distribution $\mu_0$ over paths from $a$ to $b$, then the optimal strategy at time $t$ would take the form
\begin{align*}
	\pi_t(x) \propto \mu_0(x) \exp\left(-\frac{\beta}{t}\sum^{t-1}_{s=1} \sum_{e \in E} d_{s,e} x_e\right)
\end{align*}
(cf.~Eq.~\eqref{eq:logit} and Section~\ref{sec:results}), reflecting the tension between the desire to stick to $\mu_0$ and the effort needed to deviate from it in order to experience less traffic.

Our main interest in this work is in the setting of online decision-making by a {\em social network} consisting of $n$ agents. This setting has the following salient characteristics:
\begin{enumerate}
\item[(i)] Each agent takes actions in a finite {\em base action space} $\{1,\ldots,q\}$, so the action space $\sX$ of the entire network is a Cartesian product $\{1,\ldots,q\}^n$. Both the number of alternatives $q$ and the number of agents $n$ are potentially very large.
\item[(ii)] The cost functions $f_t \in \cF$ decompose into sums of one- and two-variable ``local" terms, where each $q$-ary variable is associated with a separate agent. Thus, when each agent chooses an action, this action affects not only this agent, but also its neighbors in the social network.
\item[(iii)] Each of the $n$ agents receives only local information both from other agents and from the environment.
\end{enumerate}
In a large network with local communication, there are two sources of uncertainty for each agent at each time $t$: uncertainty about the future costs, as well as uncertainty about the actions of all agents outside of that agent's neighborhood in the social network.

Our main contribution is a construction of a decentralized strategy that takes into account these features and whose regret is sublinear in the time horizon $T$ and polylogarithmic in the network parameters (the number of agents and the maximum neighborhood size). We first present a {\em centralized} strategy and analyze its regret \eqref{eq:regret} with respect to a class $\cF$ of cost functions that decompose into individual (per-agent) and pairwise costs, where the latter affect only those agents that are neighbors in the social network. We then develop an approximate decentralized implementation of this centralized strategy using ideas from statistical physics (specifically, the well-known {\em Glauber dynamics} or the {\em Gibbs sampler} \cite{Glauber_Ising,Turchin_Gibbs_sampler,Tierney_MCMC}). It should be pointed out that decentralized strategies based on the Glauber dynamics have been studied in the literature on economics \cite{Blume_strategic_statmech,Young_individual_strategy_book,AlosFerrer-logit} and on evolutionary dynamics \cite{Sandholm_book} in the context of convergence to equilibrium in large systems consisting of interconnected agents with local interactions. Our main result (Theorem~\ref{thm:LI_regret_bound}) states that, under a certain regularity condition involving the inverse temperature (or inertia) parameter $\beta$ and the maximum degree of the social network, the regret of the decentralized strategy based on the Glauber dynamics also exhibits favorable scaling as a function of $T$ and network parameters. The proof of Theorem~\ref{thm:LI_regret_bound} relies on the aforementioned ideas from statistical physics, as well as on some recent developments in the theory of Markov chains --- specifically, on Ollivier's notion of a {\em positive Ricci curvature} of a Markov chain on a complete separable metric space \cite{Ollivier_Ricci}.

\subsection{Related literature}

The most closely related work to ours is a recent paper by Gamarnik et al.~\cite{Gamarnik_etal_corr_decay}, which studies decentralized combinatorial optimization of a random locally decomposable objective function and shows that, under a certain correlation decay condition similar to Dobrushin's uniqueness condition from statistical physics (see, e.g., \cite[Section~V.1]{Sim93}), it is possible to construct polynomial-time approximation schemes relying only on local information. However, that work is concerned exclusively with {\em static} (or offline) optimization problems, in which the objective function is fixed. On the other hand, just as in \cite{Gamarnik_etal_corr_decay}, we assume that the instantaneous network costs $f_1,\ldots,f_T$ decompose into a sum of individual and pairwise interaction terms, and at each time step each agent is informed only about its own cost and the pairwise costs in its immediate neighborhood. Another difference with \cite{Gamarnik_etal_corr_decay} is that they allow communication not only between any pair of immediate neighbors in the graph, but also between agents connected by paths of a given length $r \ge 1$. By contrast, we follow the rest of the social network literature and allow direct communication only between neighbors; however, there are also indirect information paths that affect the scaling of the regret.  Finally, the connection between the correlation decay conditions in \cite{Gamarnik_etal_corr_decay} and statistical physics is primarily qualitative, whereas our regularity condition stated in Theorem~\ref{thm:LI_regret_bound}, as well as the technique used in the proof of the theorem, are more directly related to ideas from statistical physics. 

There is also extensive literature on regret minimization in multiagent games, e.g., \cite{Foster_Vohra_calibrated_learning,Foster_Young_learning_Nash,Hart_MasColell_adaptive1,Hart_MasColell_adaptive2}, and in particular in graphical games \cite{Kearns_graphical_games} (a class of games, in which the payoff structure is aligned with the social network governing the agents' interactions). However, in this line of work, regret minimization is a goal of each individual agent, who views the rest of the network as a potential opponent. A typical result is that, provided each agent follows a suitable regret-minimizing strategy, the empirical distribution of the actions converges to some equilibrium (e.g., Nash or correlated equilibrium) of the game. By contrast, we view the social network as a {\em team} that has a common opponent, the environment. Thus, our work can be viewed as an extension of the classical Bayesian economic theory of teams \cite{MarschakRadner,Radner} to the realm of online decision-making in the presence of Knightian uncertainty.

\subsection{Some notation and preliminaries}
\label{ssec:notation}

Here, we provide some basic concepts and results that will be used later in the development.
The {\em total variation distance} between any two distributions $\mu,\nu \in \cP(\sX)$ is given by
\begin{align*}
	\| \mu - \nu \|_{\rm TV} \deq \frac{1}{2}\sum_{x \in \sX} |\mu(x) - \nu(x)|.
\end{align*}
The {\em Kullback--Leibler divergence} (or {\em relative entropy}) between $\mu$ and $\nu$ is
\begin{align*}
	D(\mu \| \nu) = \begin{cases}
	\displaystyle\left\langle\mu, \ln \frac{\mu}{\nu}\right\rangle & \text{if ${\rm supp}(\mu) \subseteq {\rm supp}(\nu)$},\\
	+ \infty & \text{otherwise},
\end{cases}
\end{align*}
where ${\rm supp}(\cdot)$ denotes the support of a probability distribution. The Kullback--Leibler divergence is nonnegative, i.e., $D(\mu \| \nu) \ge 0$ for all $\mu,\nu \in \cP(\sX)$, and positive definite, i.e., $D(\mu \| \nu) = 0$ if and only if $\mu = \nu$. (There are other properties, such as convexity, which we do not use in this paper. The reader is invited to consult any text on information theory, such as Cover and Thomas \cite{CovTho06}, for more details.)
 These two quantities are related via the Csisz\'ar--Kemperman--Kullback--Pinsker (CKKP) inequality \cite[Lemma~17.3.2]{CovTho06}\footnote{The inequality \eqref{eq:CKKP} is often referred to as simply Pinsker's inequality with reference to the book \cite{PinskerBook}, which was a translation of the original Russian text from 1960. However, in \cite{PinskerBook} Pinsker established a different bound that can be used to deduce \eqref{eq:CKKP}, but with a much larger constant in front of the relative entropy on the right-hand side. The tight bound \eqref{eq:CKKP} was obtained contemporaneously by Csisz\'ar \cite{Csiszar_divergence}, Kemperman \cite{Kemperman_infotheory}, and Kullback \cite{Kullback_TV,Kullback_TV_correction}. The authors would like to thank Prof.\ Sergio Verd\'u for pointing out the correct attribution.}
\begin{align}\label{eq:CKKP}
	\| \mu - \nu \|_{\rm TV} \le \sqrt{\frac{1}{2}D(\mu \| \nu)}.
\end{align}
We will also need some concepts from statistical physics (see, e.g., \cite{Sim93}). Any probability distribution 
$\mu \in \cP(\sX)$ defines a family of {\em Gibbs distributions} indexed 
by functions $g : \sX \to \Reals$:
\begin{align}\label{eq:Gibbs}
	\mu_g(x) \deq \frac{\mu(x)\exp\left(g(x)\right)}{\ave{\mu, \exp(g)}},\qquad \forall x \in \sX.
\end{align}
In statistical physics, each $x \in \sX$ is associated with a possible configuration of a physical system, and \an{$g : \sX \to \Reals$} is the negative energy function. 
\an{In that context, Eq.~\eqref{eq:Gibbs}} describes the probabilities of different configurations when the system with energy function \an{$g$} is in a state of  equilibrium with a thermal environment at unit absolute temperature \cite{Sim93}. \an{The following lemma (see, e.g., \cite[Lemma~V.1.4]{Sim93} for a slightly looser bound) provides some properties of Gibbs distributions that will be useful later on in the development of our main results}:
\begin{lemma}\label{lm:Gibbs} \an{Let $g,h$ be any two real-valued functions on $\sX$.
Then we have}
	\begin{align}
		D(\mu_g \| \mu_h) &\le \frac{\| g - h \|^2_{\rm s}}{8}, \label{eq:KL_Gibbs}
	\end{align}
	and
	\begin{align}
		\| \mu_g - \mu_h \|_{\rm TV} &\le \frac{\| g - h \|_{\rm s}}{4}, \label{eq:TV_Gibbs}
	\end{align}
where \an{$\|f\|_s$ is the {\em span seminorm} (or {\em oscillation}) of a function $f : \sX \to \Reals$
given by}
	\begin{align*}
		\| f \|_{\rm s} \deq \max_{x \in \sX} f(x) - \min_{x \in \sX}f(x).
	\end{align*}
\end{lemma}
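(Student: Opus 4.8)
The plan is to reduce both inequalities to a single variance bound for exponentially tilted measures. First I would observe that $\mu_g$ is itself a Gibbs distribution built from $\mu_h$: writing $\phi \deq g - h$, direct substitution into \eqref{eq:Gibbs} shows that $\mu_g = (\mu_h)_\phi$, i.e.\ $\mu_g(x) \propto \mu_h(x)\exp(\phi(x))$. This lets me work with the single perturbation $\phi$ rather than with $g$ and $h$ separately, and it makes the span seminorm $\| g-h \|_{\rm s} = \| \phi \|_{\rm s}$ the natural quantity to track.

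Next I would compute the relative entropy in closed form. Since $\ln(\mu_g(x)/\mu_h(x)) = \phi(x) - \ln\ave{\mu_h, \exp(\phi)}$, averaging against $\mu_g$ gives $D(\mu_g \| \mu_h) = \ave{\mu_g, \phi} - \ln \ave{\mu_h, \exp(\phi)}$. To exploit this, introduce the log-moment-generating (cumulant) function $\Lambda(t) \deq \ln \ave{\mu_h, \exp(t\phi)}$ for $t \in [0,1]$. Differentiating under the (finite) sum gives $\Lambda(0) = 0$, $\Lambda'(t) = \ave{(\mu_h)_{t\phi}, \phi}$, and $\Lambda''(t) = \operatorname{Var}_{(\mu_h)_{t\phi}}(\phi)$, so $\Lambda$ is convex. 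Evaluating at $t=1$ and using $\mu_g = (\mu_h)_\phi$ identifies the two terms above as $\ave{\mu_g,\phi} = \Lambda'(1)$ and $\ln\ave{\mu_h,\exp(\phi)} = \Lambda(1)$; hence $D(\mu_g \| \mu_h) = \Lambda'(1) - \Lambda(1)$.

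The core estimate is then purely analytic. Since $\Lambda(0)=0$, I would write $\Lambda'(1) - \Lambda(1) = \int_0^1 \left[\Lambda'(1) - \Lambda'(t)\right]\,\d t = \int_0^1 \int_t^1 \Lambda''(s)\,\d s\,\d t = \int_0^1 s\,\Lambda''(s)\,\d s$, where the last equality is Fubini's theorem. The key bound is that a random variable taking values in an interval of length $\| \phi \|_{\rm s}$ has variance at most $\| \phi \|_{\rm s}^2/4$ (Popoviciu's inequality; the extreme case is a two-point mass at the endpoints). Applying this to $\Lambda''(s) = \operatorname{Var}_{(\mu_h)_{s\phi}}(\phi)$ and using $\int_0^1 s\,\d s = 1/2$ yields $D(\mu_g \| \mu_h) \le \tfrac{1}{2}\cdot\tfrac{\|\phi\|_{\rm s}^2}{4} = \tfrac{\|g-h\|_{\rm s}^2}{8}$, which is \eqref{eq:KL_Gibbs}. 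Finally \eqref{eq:TV_Gibbs} follows by feeding this into the CKKP inequality \eqref{eq:CKKP}: $\|\mu_g-\mu_h\|_{\rm TV} \le \sqrt{\tfrac12 D(\mu_g\|\mu_h)} \le \sqrt{\|g-h\|_{\rm s}^2/16} = \|g-h\|_{\rm s}/4$.

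I expect the only genuine content to be the variance bound $\Lambda''(s) \le \|\phi\|_{\rm s}^2/4$, together with the observation that the correct weighting $\int_0^1 s\,\d s$ (rather than a crude $\sup_s \Lambda''(s)$) is what produces the sharp constant $1/8$ rather than $1/4$; everything else is bookkeeping. A looser route that bounds $\Lambda'(1)-\Lambda(1)$ via the mean value theorem and $\sup_t \Lambda''(t)$ would give the weaker constant alluded to in \cite[Lemma~V.1.4]{Sim93}, so the main point is to retain the extra factor of one half by integrating the curvature exactly.
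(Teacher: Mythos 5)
Your proof is correct, and every step checks out: the observation that $\mu_g = (\mu_h)_\phi$ with $\phi \deq g-h$, the identity $D(\mu_g \| \mu_h) = \Lambda'(1)-\Lambda(1)$ for the cumulant function $\Lambda(t) = \ln\ave{\mu_h, \exp(t\phi)}$, the Fubini identity $\Lambda'(1)-\Lambda(1) = \int_0^1 s\,\Lambda''(s)\,\d s$, the variance (Popoviciu) bound $\Lambda''(s) = \operatorname{Var}_{(\mu_h)_{s\phi}}(\phi) \le \|\phi\|^2_{\rm s}/4$, and the final CKKP step are all valid, with differentiation under the sum trivially justified since $\sX$ is finite. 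Your route differs from the paper's in how the key estimate is handled. The paper writes the exact expression $D(\mu_g \| \mu_h) = \ave{\mu_g, g-h} + \ln\ave{\mu_g, \exp(h-g)}$ and then invokes the Hoeffding bound $\ln\ave{\nu,\exp(F)} \le \ave{\nu,F} + \|F\|^2_{\rm s}/8$ as a black box (cited to \cite{Hoe63}), applied with $\nu = \mu_g$ and $F = h-g$, so that the linear terms cancel and the constant $1/8$ appears at once; you instead base the tilting at $\mu_h$, identify the divergence as the Bregman-type gap $\Lambda'(1)-\Lambda(1)$, and re-derive the sub-Gaussian estimate from first principles --- which is essentially an unfolding of the standard proof of Hoeffding's lemma along the exponential interpolation path. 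What the paper's version buys is brevity and modularity; what yours buys is self-containedness and a transparent account of where the sharp constant comes from, namely the exact weight $\int_0^1 s \,\d s = 1/2$ against the curvature, as opposed to the constant $1/4$ that a cruder mean-value-theorem argument yields (consistent with the looser bound of \cite[Lemma~V.1.4]{Sim93} that the paper alludes to). The deduction of the total variation bound \eqref{eq:TV_Gibbs} from \eqref{eq:KL_Gibbs} via the CKKP inequality \eqref{eq:CKKP} is identical in both proofs.
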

\noindent The proof is elementary, so we give it in Appendix~\ref{app:Gibbs_lemma} for completeness.

\section{The model and problem formulation}

We model the social network by a simple undirected graph $G = (V,E)$, where each vertex $v \in V$ is associated with an agent, and the edges $\edge{u}{v} \in E$ indicate symmetric pairwise interactions (in particular, information exchange) among agents. For each $v$, we denote by $\partial v \deq \big\{ u \in V: \edge{u}{v} \in E \big\}$ the set of {\em neighbors} of $v$, and let $\partial_+v \deq \{v\} \cup \partial v$ denote the set consisting of agent $v$ and all of its neighbors. The {\em maximum degree} of $G$ is
\begin{align*}
	\Delta \deq \max_{v \in V}|\partial v|.
\end{align*}
Each agent takes actions in the base \an{action} set $\{1,\ldots,q\}$. 
The elements of the set \begin{align*}
\sX \deq \Big\{ x = (x_v)_{v \in V} : x_v \in \{1,\ldots,q\}\Big\}
\end{align*} 
will be referred to as {\em network action profiles}. 
For each $v \in V$, we fix a probability measure \an{$\mu_{v,0}$} on $\{1,\ldots,q\}$, and let $\mu_0 \in \cP(\sX)$ denote the product measure
\begin{align}\label{eq:default_measure}
	\mu_0(x) \deq \prod_{v \in V}\an{\mu_{v,0}}(x_v), \qquad \hbox{\an{for all} $x \in \sX$}.
\end{align}
We assume that each \an{$\mu_{v,0}$} charges every action $a \in \{1,\ldots,q\}$, i.e., $\an{\mu_{v,0}}(a) > 0$ for all $a$. The probability measure \an{$\mu_{v,0}$} describes the default individual behavior of agent $v$. Finally, we are given two classes of local cost functions: the class $\Phi$ of one-variable (vertex) costs $\phi : \{1,\ldots,q\} \to \Reals$ and the class $\Psi$ of two-variable (edge) costs $\psi : \{1,\dots,q\} \times \{1,\ldots,q\} \to \Reals$. With this, we denote by $\cF = \cF_{\Phi,\Psi}$ the space of all functions $f \colon \sX \to \Reals$ of the form
\begin{align}\label{eq:cost_function}
	f(x) = \sum_{v \in V} \phi_v(x_v) + \sum_{\edge{u}{v} \in E}\psi_{u,v}(x_u,x_v),
\end{align}
where $\phi_v \in \Phi$ and $\psi_{u,v} \in \Psi$ for all $v \in V$ and all $ \edge{u}{v} \in E$. 

The interaction among the agents and the environment takes place according to the following protocol: Initially, each agent $v \in V$ starts out with an empty information set $I_{v,0} = \varnothing$ and draws an action $X_{v,0} \in \{1,\ldots,q\}$ at random according to \an{$\mu_{v,0}$}, independently of all other agents. At each discrete time step $t \in \{1,\ldots,T\}$, a single agent $U_t \in V$ is activated uniformly at random independently of all other past data. 
This agent takes a random action $X_{U_t,t}$ on the basis of all information currently available to it, 
while all other agents $v \in V \backslash \{U_t\}$ replay their actions from the previous time step $t - 1$. Once the network action profile $X_t = (X_{v,t} : v \in V)$ for time $t$ is generated, each agent $v$ observes
\an{its instantaneous cost function $\phi_{v,t}$, its instantaneous local-interaction cost functions 
$\psi_{u,v,t}$ for $u \in \partial v$, and the decisions of all its neighbors (of course, the agent knows its own decision $X_{v,t}$). 
Formally, each agent $v\in V$ at time $t$ observes $\iota_{v,t}$, where } 
\begin{align}\label{eq:new-info}
	\iota_{v,t} = \Big( \phi_{v,t}; \big(\psi_{u,v,t} : u \in \partial v\big);  \big(X_{u,t} : u \in \partial_+v\big)\Big),
\end{align}
and updates its information to $I_{v,t} = (I_{v,t-1},\iota_{v,t})$. Here, $(\phi_{v,t})_{v \in V}$ and $(\psi_{u,v,t})_{\edge{u}{v} \in E}$ are the local costs for each agent and for each pair of interacting agents that the environment has generated for time $t$. \an{As we mentioned earlier}, 
we assume that the environment is nonreactive, i.e., all the cost functions are fixed in advance but revealed to the agents sequentially. Figure~\ref{fig:ODO} gives a summary of this process.

\begin{figure}[t]
\bookbox{\small
{\textsf{Parameters:}} base action set $\{1,\ldots,q\}$; network graph $G = (V,E)$; 
default probability measures \an{$\mu_{v,0}$} for all $v \in V$; local function classes $\Phi,\Psi$; 
number of rounds $T \in 
\Naturals$.

\medskip\noindent
{\textsf{Initialization of information sets:}} for each $v \in V$, let $I_{v,0} = \varnothing$.

\medskip\noindent
\textsf{Initialization of actions:} for each $v \in V$, draw $X_{v,0}$ at random according to $\an{\mu_{v,0}}$, independently of all other $v$'s.

\medskip\noindent
For each round $t=1,2,\ldots,T$:
\begin{itemize}
\item[(1)] An agent $U_t \in V$ is chosen uniformly at random.
\item[(2)] Agent $U_t$ draws a random action $X_{U_t,t}$ on the basis of \an{its} current information $I_{U_t,t-1}$; all other agents $v \in V\backslash \an{\{U_t\}}$ replay their most recent action: 
$X_{v,t} = X_{v,t-1}$.
\item[(3)] Each agent $v \in V$ observes
\begin{itemize}
\item the current cost functions $\phi_{v,t} \in \Phi$ and $\psi_{u,v,t} \in \Psi$ for all $u \in \partial v$
\item the actions $X_{u,t}$ for all $v \in \partial_+v$,
\end{itemize}
and updates \an{its} information set to 
\an{$I_{v,t}=(I_{v,t-1},\iota_{v,t})$, where $\iota_{v,t}$ is the new information available to agent at time $t$ (cf.\ Eq.~\eqref{eq:new-info}).
}
\end{itemize}
}
\caption{Online discrete optimization in a network \an{of agents} with local interactions.}
\label{fig:ODO}
\end{figure}

For each $t = 1,\ldots,T$, let $\mu_t$ denote the probability distribution of the network action profile $X_t$.\footnote{We will adhere to the following convention: we will use $\mu_t$ (respectively, $\pi_t$) to denote the distribution of the action profile $X_t$ in the decentralized (respectively, centralized) scenario.} For a fixed sequence of cost functions selected by the environment, the probability measures $\mu_1,\ldots,\mu_T$ are fully specified given the initial condition $\mu_0$ in \eqref{eq:default_measure} and the sequence of conditional probability distributions
\begin{align}\label{eq:transitions}
	\Prob_{t+1}(x_{t+1}|I_{t}) = \frac{1}{|V|}\sum_{v \in V}\Prob_{v,t+1}(x_{v,t+1}|I_{v,t})\1_{\{x_{-v,t+1}=x_{-v,t}\}}, \qquad t = 0,1,\ldots,T-1
\end{align}
where $\1_{\{\cdot\}}$ is an indicator function that takes value $1$ when the logical predicate $\{\cdot\}$ is true and is $0$ otherwise, $I_{t} = (I_{v,t} : v \in V)$ is all \an{the information available immediately after time $t$}, $\Prob_{v,t+1}(\cdot|I_{v,t})$ is the conditional distribution (or local stochastic update rule) according to which agent $v$ draws its action $x_{v,t+1}$, \an{while} $x_{-v,t}$ is the $(|V|-1)$-tuple obtained from $x_t$ by deleting the coordinate corresponding to agent $v$, \an{i.e., }$x_{-v,t} \deq \big(x_{u,t} : u \in V \backslash \{v\}\big)$. The instantaneous loss incurred by the network at time $t$ is given by
\begin{align*}
	\ell_t(\mu_t) = \beta\ave{\mu_t,f_t} + D(\mu_t \| \mu_0),
\end{align*}
where
\begin{align*}
	f_t(x) = \sum_{v \in V}\phi_{v,t}(x_v) + \sum_{\edge{u}{v} \in E}\psi_{u,v,t}(x_u,x_v)
\end{align*}
is the instantaneous cost function for the entire network at time $t$. After $T$ rounds, the {\em regret} of the network with respect to the sequence $f_1,\ldots,f_T$ is
\begin{align*}
	R^{\rm LI}_T(f^T) \deq  \sum^T_{t=1} \ell_t(\mu_t) - \inf_{\nu \in \cP(\sX)}\sum^T_{t=1}\ell_t(\nu)
\end{align*}
where the superscript ${\rm LI}$ stands for ``local interaction.'' The corresponding worst-case regret is
\begin{align}\label{eq:decentralized_regret}
	R^{\rm LI}_T(\cF) \deq \sup_{f_1,\ldots,f_T \in \cF} R^{\rm LI}_T(f^T).
\end{align}
 Our objective is to design the local stochastic update rules 
 $\Prob_{v,t}(\cdot|I_{v,t})$ for all $v \in V$ and all $t \in \{1,\ldots,T\}$ to guarantee that 
 the regret~\eqref{eq:decentralized_regret} is sublinear in $T$ and polynomial in 
the inverse temperature parameter $\beta$, the number of basic actions $q$, the size $|V|$ of the network, 
 and the maximum number $\Delta$ of each agent's neighbors in the social graph.

\section{The main results}
\label{sec:results}

To motivate our design \an{of a decentralized strategy}, we start by developing a particular centralized scheme that, as we shall see, can be well-approximated with a natural distributed implementation. Consider a fixed but arbitrary sequence of instantaneous cost functions $f_1,\ldots,f_T \in \cF$ chosen by the environment. Our centralized strategy is obtained by the following recursive construction. Suppose that the distributions $\pi_1,\ldots,\pi_t$ of the action profiles $X_1,\ldots,X_t$ have already been chosen. We choose the next $\pi_{t+1}$ to balance the greedy tendency to minimize the most recent instantaneous loss $\ell_t(\cdot) = \beta\ave{\cdot,f_t} + D(\cdot \| \mu_0)$ against the cautious tendency to stay close to what worked well in the past, i.e., $\pi_t$. Hence, a good candidate for $\pi_{t+1}$ is
\begin{align}\label{eq:OLP_objective}
	\pi_{t+1} = \argmin_{\pi \in \cP(\sX)}\Bigg\{ \gamma_t \Big[ \beta\ave{\pi,f_t} + D(\pi \| \mu_0)\Big] + D(\pi \| \pi_t)\Bigg\},
\end{align} 
where the weight $\gamma_t>0$ controls the trade-off between the greedy and the cautious behavior. This construction is reminiscent of the so-called {\em mirror descent algorithms} for online convex optimization~\cite[Chapter~11]{CesLug06}, with $\gamma_t$ viewed as a step size at time $t$. In contrast to the mirror descent algorithms, here the optimization is performed in the space of probability measures and there is no linearization of the objective function. 
An application of the method of Lagrange multipliers leads to the following solution:
\begin{align}\label{eq:Renyi_strategy}
  \pi_1 = \mu_0\quad\hbox{and} \quad
		\pi_{t+1}(x) = 
		\frac{\left(\mu^{\gamma_t}_0(x) \pi_t(x) 
		\exp\left(-\gamma_t \beta f_t(x)\right)\right)^{\frac{1}{1+\gamma_t}}}{Z_{t+1}}, \,\, t = 1,2,\ldots
	\end{align}
where $Z_{t+1}$ is the normalization constant ensuring that $\pi_{t+1}$ 
is a bona fide probability distribution. We will work with $\gamma_t=\frac{1}{t}$.
We now summarize the key properties of this strategy.

\begin{proposition}\label{prop:OLP} 
The strategy \eqref{eq:Renyi_strategy}, 
\an{with $\gamma_t=\frac{1}{t}$}, has the following properties:
	\begin{enumerate}
		\item For any  $t = 0,1,2,\ldots$, the distribution $\pi_{t+1}$ can be expressed as
		\begin{align}\label{eq:pi_t}
			\pi_{t+1}(x) = \frac{\mu_0(x) 
			\exp\left(-\beta F_t(x)\right)}{\tZ_{t+1}},
		\end{align}
		where $\tZ_{t+1}$ is a normalization constant, 
		and the functions $F_t$ are given by 
		\begin{align}\label{eq:discount-fun}
		F_t(x)= \begin{cases}
		0, &t=0, \\
		\frac{1}{t+1}\sum_{s=1}^{t} f_s(x), &t \ge1.
	\end{cases}
		\end{align} 
		\item Suppose that the functions $\phi \in \Phi$ and $\psi \in \Psi$ take values 
		in the interval $[-1,1]$. Then, for all $t \ge 1$,
		\begin{align}\label{eq:KL_stepsize}
		D(\pi_t \| \pi_{t+1}) \le 2\left(\frac{\beta|V|(\Delta+1)}{t+1}\right)^2.
		\end{align}
		\item It achieves the following bound on the worst-case regret: for all $T\ge1$,
				\begin{align}\label{eq:Renyi_regret}
					R_T(\cF)  \le 
					2\left(\beta|V|(\Delta+1) \right)^2 \ln (T+1)
				+ \ln \frac{1}{\theta},
				\end{align}
				where
				$\cF=\cF_{\Phi,\Psi}$ and
				\begin{align*}
					\theta \deq \min_{a \in \{1,\ldots,q\}} \mu_{0}(a).
				\end{align*}
	\end{enumerate}
\end{proposition}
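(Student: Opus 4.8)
For part~1, I would argue by induction on $t$. The base case $\pi_1=\mu_0$ is exactly \eqref{eq:pi_t} with $F_0\equiv 0$. For the inductive step, I substitute the hypothesis $\pi_t(x)\propto\mu_0(x)\exp(-\beta F_{t-1}(x))$ into the recursion \eqref{eq:Renyi_strategy} with $\gamma_t=1/t$. Using $\tfrac{1}{1+\gamma_t}=\tfrac{t}{t+1}$, the bracketed quantity becomes $\tfrac{1}{\tZ_t}\mu_0^{(t+1)/t}\exp(-\beta(F_{t-1}+\tfrac1t f_t))$, and raising it to the power $\tfrac{t}{t+1}$ gives, up to normalization, $\mu_0\exp(-\beta\cdot\tfrac{t}{t+1}(F_{t-1}+\tfrac1t f_t))$. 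The only thing to verify is the algebraic identity $\tfrac{t}{t+1}(F_{t-1}+\tfrac1t f_t)=\tfrac{1}{t+1}\sum_{s=1}^{t}f_s=F_t$, which is immediate from \eqref{eq:discount-fun}. Normalizing yields \eqref{eq:pi_t}.

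For part~2, the key observation is that \eqref{eq:pi_t} exhibits $\pi_t$ and $\pi_{t+1}$ as Gibbs distributions over the base $\mu_0$ with negative energies $-\beta F_{t-1}$ and $-\beta F_t$ in the sense of \eqref{eq:Gibbs}, so Lemma~\ref{lm:Gibbs} applies with $g=-\beta F_{t-1}$, $h=-\beta F_t$: by \eqref{eq:KL_Gibbs}, $D(\pi_t\|\pi_{t+1})\le\tfrac18\beta^2\|F_t-F_{t-1}\|_{\rm s}^2$. I then reduce to bounding the span of the increment. A short computation from \eqref{eq:discount-fun} gives $F_t-F_{t-1}=\tfrac{1}{t+1}(f_t-F_{t-1})$, whence $\|F_t-F_{t-1}\|_{\rm s}\le\tfrac{1}{t+1}(\|f_t\|_{\rm s}+\|F_{t-1}\|_{\rm s})$ by the triangle inequality for the seminorm. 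The remaining ingredient is that every $f\in\cF$ of the form \eqref{eq:cost_function} with $\phi,\psi\in[-1,1]$ satisfies $\|f\|_{\rm s}\le 2|V|(\Delta+1)$: changing a single coordinate $x_v$ affects $f$ only through $\phi_v$ and the at most $\Delta$ edge terms incident to $v$, hence by at most $2(\Delta+1)$, and walking from an argmin of $f$ to an argmax one coordinate at a time ($|V|$ steps) gives the claim. Since $\|F_{t-1}\|_{\rm s}\le 2|V|(\Delta+1)$ as well, we obtain $\|F_t-F_{t-1}\|_{\rm s}\le\tfrac{4|V|(\Delta+1)}{t+1}$, and substitution produces exactly \eqref{eq:KL_stepsize}.

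For part~3, I would first recognize the scheme as \emph{follow-the-leader}. By the Gibbs variational principle, \eqref{eq:pi_t} is equivalent to $\pi_{t+1}=\argmin_{\pi}\{\sum_{s=1}^{t}\ell_s(\pi)+D(\pi\|\mu_0)\}$, i.e.\ $\pi_{t+1}$ is the leader for the augmented loss sequence $\ell_0\deq D(\cdot\|\mu_0),\ell_1,\ell_2,\ldots$ (note $\pi_1=\mu_0$ minimizes $\ell_0$). The be-the-leader inequality, proved by the usual induction, then gives $\sum_{t=1}^{T}\ell_t(\pi_{t+1})\le\inf_{\nu}\{D(\nu\|\mu_0)+\sum_{t=1}^{T}\ell_t(\nu)\}$, so that $\sum_{t=1}^{T}\ell_t(\pi_{t+1})-\inf_{\nu}\sum_{t=1}^{T}\ell_t(\nu)\le\max_{\nu}D(\nu\|\mu_0)=\ln\tfrac1\theta$; this accounts for the additive $\ln\tfrac1\theta$ in \eqref{eq:Renyi_regret}. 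It therefore remains to control the \emph{one-step} (stability) terms in the decomposition $R_T(f^T)=\sum_{t=1}^{T}[\ell_t(\pi_t)-\ell_t(\pi_{t+1})]+\big(\sum_{t=1}^{T}\ell_t(\pi_{t+1})-\inf_{\nu}\sum_{t=1}^{T}\ell_t(\nu)\big)$.

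This stability sum is the crux, and the decisive point is to \emph{not} bound its terms in isolation. Writing $\ell_t(\pi)=\beta\ave{\pi,f_t}+D(\pi\|\mu_0)$, the relative-entropy parts telescope: $\sum_{t=1}^{T}[D(\pi_t\|\mu_0)-D(\pi_{t+1}\|\mu_0)]=-D(\pi_{T+1}\|\mu_0)\le 0$ because $\pi_1=\mu_0$. Hence $\sum_{t=1}^{T}[\ell_t(\pi_t)-\ell_t(\pi_{t+1})]\le\beta\sum_{t=1}^{T}\ave{\pi_t-\pi_{t+1},f_t}$, and I bound each inner product by the span/total-variation duality $\ave{\pi_t-\pi_{t+1},f_t}\le\|f_t\|_{\rm s}\,\|\pi_t-\pi_{t+1}\|_{\rm TV}$ (legitimate since $\pi_t-\pi_{t+1}$ sums to zero, so $f_t$ may be recentered before applying H\"older). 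With $\|f_t\|_{\rm s}\le 2|V|(\Delta+1)$ from part~2, and with \eqref{eq:TV_Gibbs} together with the increment bound $\|F_t-F_{t-1}\|_{\rm s}\le\tfrac{4|V|(\Delta+1)}{t+1}$ giving $\|\pi_t-\pi_{t+1}\|_{\rm TV}\le\tfrac{\beta|V|(\Delta+1)}{t+1}$, each round contributes at most $\tfrac{2(\beta|V|(\Delta+1))^2}{t+1}$, and $\sum_{t=1}^{T}\tfrac{1}{t+1}\le\ln(T+1)$ completes \eqref{eq:Renyi_regret}. The main obstacle is exactly this stability estimate: the naive first-order bound $\ell_t(\pi_t)-\ell_t(\pi_{t+1})\le\ave{\nabla\ell_t(\pi_t),\pi_t-\pi_{t+1}}$ overshoots the constant, whereas telescoping the entropy terms isolates the single genuinely time-varying quantity $\ave{\pi_t-\pi_{t+1},f_t}$ and lets Lemma~\ref{lm:Gibbs} supply the sharp $1/(t+1)$ decay with the stated constant.
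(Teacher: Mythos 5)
Your proposal is correct. Parts 1 and 2 coincide with the paper's own argument: the same induction for the Gibbs form \eqref{eq:pi_t}, and the same use of Lemma~\ref{lm:Gibbs} with the increment identity $F_t - F_{t-1} = \tfrac{1}{t+1}(f_t - F_{t-1})$ and the span bound $\| f \|_{\rm s} \le 2|V|(\Delta+1)$ (you derive the latter by changing one coordinate at a time, the paper via $\| f \|_{\rm s} \le 2\| f \|_\infty \le 2(|V|+|E|)$; both are fine). Part 3 is where you take a genuinely different route. The paper uses no follow-the-leader machinery: it solves \eqref{eq:Renyi_strategy} for $\beta f_t$ to obtain the exact identity $\ell_t(\nu) = (t+1)D(\nu\|\pi_{t+1}) - t D(\nu\|\pi_t) - (t+1)\ln Z_{t+1}$ for every $\nu$, whence the regret against any comparator telescopes exactly to $\sum_{t=1}^T (t+1)D(\pi_t\|\pi_{t+1}) + D(\nu\|\mu_0) - (T+1)D(\nu\|\pi_{T+1})$, and Part 2 together with $D(\nu\|\pi_{T+1}) \ge 0$ finishes. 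You instead identify $\pi_{t+1}$ as the follow-the-leader iterate for the augmented sequence $\ell_0 = D(\cdot\|\mu_0), \ell_1, \ldots$, invoke be-the-leader for the $\ln\tfrac{1}{\theta}$ term, and control the stability sum by telescoping its entropy parts (using $\pi_1 = \mu_0$) and applying $\ave{\pi_t - \pi_{t+1}, f_t} \le \| f_t \|_{\rm s}\, \| \pi_t - \pi_{t+1}\|_{\rm TV}$ with \eqref{eq:TV_Gibbs}; this reproduces the same per-round bound $2\left(\beta|V|(\Delta+1)\right)^2/(t+1)$ and hence identical constants. The paper's identity buys exactness (its only slack is $D(\nu\|\pi_{T+1}) \ge 0$ and the bound of Part 2) and foregrounds the drift terms $D(\pi_t\|\pi_{t+1})$ that are reused in the proof of Theorem~\ref{thm:LI_regret_bound}; your route buys modularity, since be-the-leader and the stability decomposition are generic online-learning tools, and your observation that the entropy terms in the stability sum telescope --- rather than bounding each round in isolation --- is exactly what keeps the constant sharp. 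One small point: your step $\max_\nu D(\nu\|\mu_0) = \ln\tfrac{1}{\theta}$ reads $\theta$ as $\min_{x \in \sX}\mu_0(x)$; this is the reading consistent with \eqref{eq:Renyi_regret} and with the paper's remark on uniform $\mu_{v,0}$, so there is no issue on your end (the paper's own proof displays the looser $|V|\ln\tfrac{1}{\theta}$, an internal inconsistency of the paper rather than of your argument).
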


Several observations and remarks are in order:
\begin{enumerate}
	\item The $O(\ln T)$ scaling of the regret is a consequence of the general fact that, due to the presence of the relative entropy term, the loss functions $\ell_t$ are \textit{strongly convex} with respect to the total variation norm (see, e.g., \cite{BarHazRak08}).
	In fact, there are matching lower bounds \cite{ABRT08} that show that this scaling of the regret is optimal for strongly convex losses. However, our analysis of the centralized strategy in \eqref{eq:Renyi_strategy} is self-contained, and the three parts of Proposition~\ref{prop:OLP} reflect the logical structure of the proof: first, we show that the strategy at time $t+1$ is given by a Gibbs measure determined by the empirical average of the instantaneous costs revealed up to time $t$; then we show that the difference between the strategies at successive times $t$ and $t+1$, as measured by the relative entropy $D(\pi_t \| \pi_{t+1})$, decays rapidly with $t$; and finally we use these intermediate results to derive a bound on the overall regret. Thus, even though the result in Proposition~\ref{prop:OLP} is not new, it differs from the rest of the literature by its emphasis on the \textit{dynamical properties} of the strategy in \eqref{eq:Renyi_strategy}, which we will exploit in the proof of our main result.
	\item Implementation of the above centralized strategy does not require advance knowledge of the time horizon $T$.	
	\item Inspection of the non-recursive 
	form \eqref{eq:pi_t} of $\pi_{t+1}$ sheds light on the role of the decaying factor $1/t$ in \eqref{eq:OLP_objective}: it is used to {\em dampen} the influence of past instantaneous costs $f_1,\ldots,f_t$. In particular, at time $t$, each cost function enters into the strategy $\pi_t$ with the same weight $1/(t+1)$. As we will see later, this averaging is crucial in ensuring that we can approximate each global randomized strategy $\pi_t$ using purely local update rules.

	\item \an{From the standpoint of the influence of the initial distributions $\mu_{v,0}$, 
	the regret bound in~\eqref{eq:Renyi_regret}
	is minimized when $\mu_{v,0}$ is the uniform distribution for all $v\in V$, resulting in
	\begin{align*}
	      R_T(\cF) \le 2\left(\beta|V|(\Delta+1) \right)^2\ln(T+1)	+ |V| \ln q.
       \end{align*}}
	\item \mr{The fact that the regret is proportional to $(|V|(\Delta+1))^2$ is not surprising in light of the fact that each cost function $f_t$ is a sum of $|V|(\Delta+1)$ terms, each of which is bounded by $1$. Thus, $\| f_t \|_s = O\big(|V|(\Delta+1)\big)$. Since the regret is governed by the relative-entropy drift terms $D(\pi_t \| \pi_{t+1})$, by Lemma~\ref{lm:Gibbs} we expect it to scale with $\max_t\| f_t \|_s = O\big((|V|(\Delta+1)^2\big)$.}
\end{enumerate}
We now use this centralized scheme to construct appropriate local update rules 
$\Prob_{v,t+1}(\cdot|I_{v,t})$ for all $v \in V$ and \an{$t \in \{1,\ldots,T-1\}$}. 
For any cost function $f$ of the form \eqref{eq:cost_function}, any $v \in V$, and any {\em boundary condition} $x_{\partial v} \in \{1,\ldots,q\}^{|\partial v|}$, we define the local cost at $v$ by
\begin{align*}
	f_v(a,x_{\partial v}) \deq \phi_v(a) + \sum_{u \in \partial v} \psi_{u,v}(x_u,a), \qquad \forall a \in \{1,\ldots,q\}.
\end{align*}
Similar notation will be used for time-indexed instantaneous and discounted cumulative costs, 
i.e., $f_{v,t}$ based on $f_t$, and $F_{v,t}$ based on $F_t$. 
For each $v \in V$ and each $t$, let
\begin{align}\label{eq:one_site_update}
	\Prob_{v,t+1}(x_{v,t+1}|I_{v,t}) \deq \frac{\mu_{v,0}(x_{v,t+1})\exp\left(-\beta F_{v,t}(x_{v,t+1},x_{\partial v,t})\right)}{Z_{t+1}(x_{\partial v,t})},
\end{align}
where 
\begin{align}\label{eq:costF}
F_{v,t}=\frac{1}{t+1}\sum_{s=1}^t f_{v,s};
\end{align}
The normalization constant $Z_{t+1}(x_{\partial v,t})$ in~\eqref{eq:one_site_update} 
now depends on the action profile $x_{\partial v,t}$ of the neighborhood of agent $v$ after time $t$:
\begin{align*}
	Z_{t+1}(x_{\partial v,t}) 
	   = \sum_{a \in \{1,\ldots,q\}} \mu_{v,0}(a)\exp\left( -\beta F_{v,t}(a,x_{\partial v,t})\right).
\end{align*}
Note that the history of previous local action profiles $x_{\partial v,1},\ldots,x_{\partial v,t}$ enters into 
the conditional probabilities  \eqref{eq:one_site_update} only through 
the most recent action profile $x_{\partial v,t}$. Moreover, if we consider a fixed but arbitrary sequence of network costs $f_1,\ldots,f_T$, then we may simplify our notation by suppressing the dependence of 
the transition probabilities $\Prob_{v,t+1}(\cdot|I_{v,t})$ and $\Prob_{t+1}(\cdot|I_t)$ on 
the costs $f_1,\ldots,f_t$ and past action profiles $x_1,\ldots,x_{t-1}$. 
Thus, instead of $\Prob_{v,t+1}(x_{t+1}|I_{v,t})$, we will write
\begin{align}\label{eq:Glauber}
	\Prob_{t+1}(x_{t+1}|x_t) = \frac{1}{|V|}\sum_{v \in V}\Prob_{v,t+1}(x_{v,t+1}|x_{\partial v,t})\1_{\{ x_{-v,t+1} = x_{-v,t} \}},
\end{align}
where we have also used the same convention for the local update rules $\Prob_{v,t+1}(\cdot|I_{v,t})$. 
So, when the instantaneous costs $f_1,\ldots,f_T$ are fixed, the network action profiles $X_0,X_1,\ldots,X_T$ 
form a Markov chain with initial distribution $\mu_0$ and 
time-inhomogeneous transition probabilities 
\begin{align*}
	{\rm Pr}(X_{t+1}=y|X_t=x) = \Prob_{t+1}(y|x).
\end{align*}
One can recognize the Markov transition kernel $\Prob_{t+1}(x_{t+1}|x_t)$ constructed from \eqref{eq:one_site_update} according to \eqref{eq:transitions} as one step of the {\em Glauber dynamics} (or the {\em Gibbs sampler}) \cite{Glauber_Ising,Turchin_Gibbs_sampler,Tierney_MCMC} induced by the Gibbs distribution $\pi_{t+1}$ in \eqref{eq:pi_t}. Consequently, for each $t$ we have the detailed balance (or time-reversibility) property
\begin{align}\label{eq:reversibility}
	\pi_t(x) \Prob_t(y|x) = \pi_t(y) \Prob_t(x|y) , \qquad \forall x,y \in \sX
\end{align}
which implies that $\pi_t$ is an invariant distribution of $\Prob_t$ 
(we give a self-contained proof of this fact in Appendix~\ref{app:Gibbs_sampler}). 
In mathematical economics and game theory, the Glauber dynamics was used by Blume \cite{Blume_strategic_statmech} (under the name ``log-linear learning") and by Young \cite{Young_individual_strategy_book} (under the name ``spatial adaptive play") 
to model the emergence of optimal global behavior in networks of agents with local interactions; see also a recent paper by Al\'os-Ferrer and Netzer \cite{AlosFerrer-logit} for a discussion of a more general class of {\em logit-response dynamics} that includes the Glauber dynamics as a special case.

\mr{We now state our main result: a bound on the regret of the decentralized strategy based on the Glauber dynamics.}

\anr{
\begin{theorem}\label{thm:LI_regret_bound} 
Suppose that all functions in $\Phi$ and $\Psi$ take values in $[-1,1]$. Also,
suppose that the parameter $\beta>0$ and satisfies the following condition:
\begin{align}\label{eq:regularity_conditions}
\Delta \beta < 1.
\end{align}
Then the strategy \eqref{eq:one_site_update}--\eqref{eq:Glauber} based on the Glauber dynamics attains the following worst-case regret:
	\begin{align}\label{eq:LI_regret_bound}
		R^{\rm LI}_T(\cF) \le
			R^{\rm LI}_T(f^T) &\le \frac{|V|}{1-\Delta \beta}\left( 2\beta^2|V|^3(\Delta+1)^2 +  K\left(|V| \ln \frac{q^2}{\theta_d} + \ln T\right)\right)\ln (T+1) \nonumber\\
				& \qquad + 2\left(\beta|V|(\Delta+1) \right)^2\ln(T+1)	+ T_1 |V| \ln \frac{q^2}{\theta_d} + \frac{2 \beta|V|^3(\Delta+1)}{1-\Delta\beta}  + \ln \frac{1}{\theta},
	\end{align}
	where $K \deq \max \left\{ |V|, \beta |V|^2 (\Delta+1)\right\}$,
	$$
	\theta_d \deq \min_{v \in V}\min_{a \in \{1,\ldots,q\}} \mu_{v,0}(a),
	$$
and $T_1 \equiv T_1(\beta,\Delta,|V|)$ is a positive constant independent of $T$.
\end{theorem}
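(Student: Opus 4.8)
The plan is to split the decentralized regret into the centralized regret, already controlled by Proposition~\ref{prop:OLP}, plus an \emph{approximation error} measuring how far the one-step Glauber iterates $\mu_t$ lie from their targets $\pi_t$. Writing $R^{\rm LI}_T(f^T) = R_T(f^T) + \sum_{t=1}^T\big[\ell_t(\mu_t)-\ell_t(\pi_t)\big]$ and using the elementary identity $D(\mu_t\|\mu_0)-D(\pi_t\|\mu_0) = D(\mu_t\|\pi_t) + \ave{\mu_t-\pi_t,\ln(\pi_t/\mu_0)}$ together with the explicit form $\ln(\pi_t/\mu_0) = -\beta F_{t-1}-\ln\tZ_t$ from \eqref{eq:pi_t}, each approximation term reduces to
\begin{align*}
\ell_t(\mu_t)-\ell_t(\pi_t) = \beta\ave{\mu_t-\pi_t,\,f_t-F_{t-1}} + D(\mu_t\|\pi_t).
\end{align*}
The first summand is at most $\beta\|f_t-F_{t-1}\|_{\rm s}\,\|\mu_t-\pi_t\|_{\rm TV}$, and since each $f_s$ is a sum of $O(|V|(\Delta+1))$ terms in $[-1,1]$ we have $\|f_t-F_{t-1}\|_{\rm s} = O(|V|(\Delta+1))$. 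Thus everything hinges on controlling $\|\mu_t-\pi_t\|_{\rm TV}$ and $D(\mu_t\|\pi_t)$, i.e., on showing that the Glauber chain tracks its slowly drifting stationary target.

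The central tool is Ollivier's coarse Ricci curvature of the single-site kernel $\Prob_t$ from \eqref{eq:Glauber}. Equip $\sX$ with the (unnormalized) Hamming metric and run a path coupling of two copies started at Hamming-adjacent profiles differing only at a vertex $w$. If the activated vertex is $w$ (probability $1/|V|$) the two local conditionals coincide and the copies merge; if it is a neighbor $u\in\partial w$ the copies disagree after the update with probability equal to the total-variation distance between the two local Gibbs updates, which by \eqref{eq:TV_Gibbs} of Lemma~\ref{lm:Gibbs} is at most $\tfrac{\beta}{4}\|F_{u,t}(\cdot,x_{\partial u})-F_{u,t}(\cdot,y_{\partial u})\|_{\rm s}\le\beta$ (only the single edge term $\psi_{w,u}$ changes, so its span is bounded independently of $t$); for any other vertex nothing happens. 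Letting $d'$ denote the post-step Hamming distance, taking expectations gives $\E[d'] \le 1-\tfrac{1}{|V|}(1-\Delta\beta)$, so under \eqref{eq:regularity_conditions} the kernel contracts the Wasserstein-$1$ distance $W_1$ at rate $\kappa \deq (1-\Delta\beta)/|V| > 0$; equivalently $W_1(\mu\Prob_t,\nu\Prob_t)\le(1-\kappa)W_1(\mu,\nu)$ for all $\mu,\nu$, and $\Prob_t$ has spectral gap at least $\kappa$.

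Combining contraction with the slow motion of the target gives the key recursion. Since $\pi_t$ is $\Prob_t$-invariant, $W_1(\mu_t,\pi_t)\le(1-\kappa)\big[W_1(\mu_{t-1},\pi_{t-1})+W_1(\pi_{t-1},\pi_t)\big]$, where by Proposition~\ref{prop:OLP}(2), the CKKP inequality \eqref{eq:CKKP}, and $W_1\le|V|\,\|\cdot\|_{\rm TV}$, the drift obeys $W_1(\pi_{t-1},\pi_t)\le \beta|V|^2(\Delta+1)/t$. Solving this linear recursion yields $W_1(\mu_t,\pi_t) = O\big(\beta|V|^3(\Delta+1)/((1-\Delta\beta)t)\big)$ once $t$ exceeds the mixing scale $1/\kappa$, so $\|\mu_t-\pi_t\|_{\rm TV}\le W_1(\mu_t,\pi_t)$ decays like $1/t$; summing the first summand over $t$ produces the dominant $\tfrac{|V|}{1-\Delta\beta}\cdot 2\beta^2|V|^3(\Delta+1)^2\ln(T+1)$ term. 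For the relative-entropy summand I would run the analogous contraction for $\chi^2$ using the spectral gap and the bound $D(\mu_t\|\pi_t)\le\ln(1+\chi^2(\mu_t\|\pi_t))\le\chi^2(\mu_t\|\pi_t)$, again folding in the drift. The new feature is the initial condition, since $\chi^2(\mu_0\|\pi_1)$ can be as large as $1/\min_x\pi_1(x)$, with $\ln(1/\min_x\pi_t)=O(|V|\ln(q^2/\theta_d))$. This forces a split into a burn-in phase $t\le T_1$, with $T_1$ of order the number of contraction steps needed to reduce this initial gap to $O(1)$, over which $D(\mu_t\|\pi_t)$ is bounded crudely by $\ln(1/\min_x\pi_t)\le|V|\ln(q^2/\theta_d)$ (yielding the $T_1|V|\ln(q^2/\theta_d)$ term), and a steady-state phase $t>T_1$ in which $D(\mu_t\|\pi_t)$ decays like $1/t$ and sums to the remaining $\ln(T+1)$ and $(\ln T)^2$-type contributions encoded by $K$. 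Adding the centralized bound \eqref{eq:Renyi_regret} completes \eqref{eq:LI_regret_bound}.

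The hard part is twofold. First, the curvature estimate must be carried out for the \emph{time-inhomogeneous} kernels with the correct $1/|V|$ scaling and the sharp threshold $\Delta\beta<1$; the path-coupling bookkeeping and the appeal to Lemma~\ref{lm:Gibbs} for the per-neighbor influence are delicate but, as sketched, fall out cleanly. Second, and more seriously, the relative-entropy term $D(\mu_t\|\pi_t)$ cannot be controlled by Wasserstein contraction alone: one must upgrade the coarse curvature to a $\chi^2$- (or entropy-) decay estimate and then reconcile the exponentially large initial divergence with the $1/t$ drift, which is precisely what dictates the burn-in length $T_1$ and the somewhat intricate constant $K$ and logarithmic factors in \eqref{eq:LI_regret_bound}. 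I expect the careful matching of the burn-in and steady-state estimates, rather than the curvature computation itself, to be the main obstacle.
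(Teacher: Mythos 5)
Your overall architecture coincides with the paper's: the same regret decomposition $R^{\rm LI}_T(f^T) \le \sum_{t}\big[\ell_t(\mu_t)-\ell_t(\pi_t)\big] + R_T(f^T)$, the same path-coupling computation giving coarse Ricci curvature $\kappa^\star = (1-\Delta\beta)/|V|$ under $\Delta\beta<1$ (your per-neighbor influence bound via Lemma~\ref{lm:Gibbs} is exactly the paper's Lemma~\ref{lm:Ricci_bound}), and the same tracking recursion $W_1(\mu_{t+1},\pi_{t+1})\le(1-\kappa^\star)W_1(\mu_t,\pi_t)+\delta_t$ with $\delta_t=O\big(\beta|V|^2(\Delta+1)/t\big)$. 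Your identity $\ell_t(\mu_t)-\ell_t(\pi_t)=\beta\ave{\mu_t-\pi_t,\,f_t-F_{t-1}}+D(\mu_t\|\pi_t)$ is correct (and somewhat cleaner than the paper's expansion), and your handling of the linear term is sound and reproduces the dominant $\frac{2\beta^2|V|^4(\Delta+1)^2}{1-\Delta\beta}\ln(T+1)$ contribution.

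The gap is in the divergence term. The paper never runs any contraction-of-divergence argument: it writes $D(\mu_t\|\mu_0)-D(\pi_t\|\mu_0)=H(\pi_t)-H(\mu_t)+\ave{\mu_t-\pi_t,\ln(1/\mu_0)}$, bounds the linear piece by $\|\mu_t-\pi_t\|_{\rm TV}\cdot|V|\ln(1/\theta_d)$, and bounds the entropy difference by the $L^1$-continuity estimate for Shannon entropy, $|H(\mu)-H(\nu)|\le 2\|\mu-\nu\|_{\rm TV}\big(\ln|\sX|+\ln\tfrac{1}{\|\mu-\nu\|_{\rm TV}}\big)$ valid once $\|\mu-\nu\|_{\rm TV}\le 1/4$; everything thus reduces to the already-established bound $\|\mu_t-\pi_t\|_{\rm TV}\le W_1(\mu_t,\pi_t)\le K p_t(1-\kappa^\star)$ with $p_t(u)=\sum_{s=1}^t u^{t-s}/s$, and the burn-in $T_1$ is simply the first time $Kp_t(1-\kappa^\star)\le 1/4$, which by Lemma~\ref{lm:recurrence} depends only on $\beta,|V|,\Delta$. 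Your proposed alternative --- upgrade the curvature bound to a spectral gap and run a $\chi^2$ contraction --- is not carried out, and as sketched it fails in two concrete ways. First, $\chi^2$ satisfies no triangle inequality, so ``folding in the drift'' of the moving targets $\pi_t\to\pi_{t+1}$ requires multiplicative change-of-measure estimates for a time-inhomogeneous chain that you do not supply; this is precisely the hard step, not routine bookkeeping. Second, the burn-in you define --- the number of steps needed to dissipate an initial divergence of size $\ln(1/\min_x\pi_1(x))=\Theta\big(|V|\ln(q/\theta_d)\big)$ at rate $\kappa^\star=(1-\Delta\beta)/|V|$ --- is of order $|V|^2\ln(q/\theta_d)/(1-\Delta\beta)$, so your $T_1$ would depend on $q$ and $\theta_d$, producing a $\big(\ln(q^2/\theta_d)\big)^2$ term rather than the theorem's $T_1|V|\ln(q^2/\theta_d)$ with $T_1=T_1(\beta,\Delta,|V|)$. (Incidentally, the exponentially large initial divergence you worry about does not exist: $F_0\equiv 0$, so $\Prob_1$ resamples coordinates from $\mu_{v,0}$ and $\mu_1=\Prob_1\mu_0=\mu_0=\pi_1$.) The fix requires no new machinery beyond what you already have: bound the divergence discrepancy through the entropy-continuity estimate in total variation, as the paper does, rather than through any $\chi^2$ or entropy contraction of the Glauber kernels.
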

}
\noindent A few comments on the interpretation of the above result:
\begin{enumerate}
	\item The regularity condition in \eqref{eq:regularity_conditions} is needed to ensure that the sequence of action profile distributions $\mu_1,\ldots,\mu_T$ induced by the Glauber dynamics \eqref{eq:one_site_update}--\eqref{eq:Glauber} closely tracks its centralized counterpart $\pi_1,\ldots,\pi_T$ from Proposition~\ref{prop:OLP}. It is, essentially, a Dobrushin uniqueness condition from statistical physics (see, e.g., \cite[Section~V.1]{Sim93}), which is typically used to establish rapid mixing (i.e., convergence to the invariant distribution) of the Glauber dynamics \cite{Weitz_Dorbushin,Dyer_etal_mixing}. The correlation decay conditions driving the results of Gamarnik et al.~\cite{Gamarnik_etal_corr_decay} are similar in spirit. 
	
\item We note that $\beta$ is a fixed exogenous parameter that quantifies the responsiveness of agents to changes in their environment (as reflected through the time-varying instantaneous costs). The regularity condition in \eqref{eq:regularity_conditions} therefore involves only the intrinsic parameters of the network and says that the Glauber dynamics \eqref{eq:one_site_update}--\eqref{eq:Glauber} is mixing whenever the temperature parameter $1/\beta$ is larger than the maximum number of neighbors of any agent. 

\item 	Ignoring terms of order lower than $\ln T$, we can express the regret bound more succinctly as
	\begin{align}\label{eq:LI_regret}
		R^{\rm LI}_T(\cF) = O\left(\frac{|V|^4 (\Delta\beta)^2}{1-\Delta\beta}\left(\ln \frac{q}{\theta_d}\right)\left(\ln T\right)^2\right).
	\end{align}
Compared to the regret bound \eqref{eq:Renyi_regret} in the centralized case, 
	the local-information regret \eqref{eq:LI_regret} has a worse (but still polynomial) dependence on the network parameters. We also observe that the regret now scales as $(\ln T)^2$, as opposed to the optimal centralized scaling of $\ln T$.
\end{enumerate}

\section{Proofs}
\subsection{Proof of Proposition~\ref{prop:OLP}}
\paragraph{Part 1.} 
We analyze the strategy~\eqref{eq:Renyi_strategy} with $\gamma_t=\frac{1}{t}$ for all $t\ge 1$. The proof is by induction on $t$. The base case is $t=1$, for
which, using \eqref{eq:Renyi_strategy} and the fact that $\gamma_1=1$, we have for all $x\in\sX$,
     \[ \pi_{2}(x) = 
		\frac{\left(\mu_0(x) \pi_1(x) 
		\exp\left(-\beta f_1(x)\right)\right)^{\frac{1}{2}} }{Z_{2}}. \]
Since $\pi_1=\mu_0$, it follows that 
\begin{align*}
\pi_2 = \frac{\mu_0 \exp\left(-\frac{\beta}{2} f_1\right)}{\tZ_2}, 
\end{align*}
thus showing that the expressions in~\eqref{eq:pi_t} and \eqref{eq:discount-fun} are 
valid for $t=1$ with $\tZ_2=Z_2$.  Now, suppose that the strategy $\pi_{t+1}$ satisfies \eqref{eq:pi_t} and \eqref{eq:discount-fun} for a given $t+1$. Then, according to the definition of $\pi_{t+2}$ via~\eqref{eq:Renyi_strategy},  and using the fact that $\gamma_{t+1}=\frac{1}{t+1}$, we have
	\begin{align*}
		\pi_{t+2}(x) 
		&= \frac{ \left(\mu^{\frac{1}{t+1}}_0(x) \pi_{t+1}(x)
		   \exp\left(- \frac{\beta}{t+1} f_{t+1}(x)\right) \right)^{\frac{t+1}{t+2} }}{ Z_{t+2}} \\
		&= \frac{\left(\mu^{\frac{1}{t+1}} _0(x) 
		       \mu_0(x) 
			\exp\left(-\frac{\beta}{t+1}\sum_{s=1}^{t} f_s(x)\right)
			\exp\left(-\frac{\beta}{t+1} f_{t+1}(x)\right)
			\right)^{\frac{t+1}{t+2}}}
			{\tZ_{t+1}^{\frac{t+1}{t+2}} Z_{t+2}},
	\end{align*}
where the last equality follows from the induction hypothesis.  Hence, for all $x\in\sX$, we have
\begin{align}\label{eq:th1part1-1}
		\pi_{t+2}(x) 
		&=  \frac{\mu_0(x) 
			\exp\left(-\frac{\beta}{t+2}\sum^{t+1}_{s=1}f_s(x)\right)}
			{\tZ_{t+1}^{\frac{t+1}{t+2}} Z_{t+2}}.
\end{align}
Eq.~\eqref{eq:th1part1-1} shows that
that~\eqref{eq:pi_t} and \eqref{eq:discount-fun} hold for $t+2$, 
with  $\tZ_{t+2} = \tZ_{t+1}^{\frac{1}{1+\gamma_{t+1}}}Z_{t+2}$.
Hence, \eqref{eq:pi_t} and \eqref{eq:discount-fun} are valid for all $t\ge1$.

\anr{
\paragraph{Part 2.} In order to bound the terms $D(\pi_t \| \pi_{t+1})$, it is convenient to use the form \eqref{eq:pi_t} of $\pi_t$ from Part 1, which expresses $\pi_t$ as a Gibbs measure. 
Therefore, we can use Lemma~\ref{lm:Gibbs} to get
\begin{align}\label{eq:start}
	D(\pi_t \| \pi_{t+1}) \le \frac{ \beta^2 \Big\| F_t - F_{t-1}\Big\|^2_{\rm s}}{8}.
\end{align}
Now we need to bound the span seminorm of $F_t - F_{t-1}$. 
From the definition of $F_t$ in \eqref{eq:discount-fun}
and the relation~\eqref{eq:th1part1-1},
it can be seen that  
\begin{align}\label{eq:F-diff}
F_1(x)=\frac{1}{2} f_1(x)\quad\hbox{and} \quad
F_t(x)=\frac{1}{t+1} f_t(x) + \frac{t}{t+1} F_{t-1}(x)
 \ \hbox{ for all $t\ge 2$}.
\end{align}
Since $F_0=0$, we can write
\begin{align*}
	F_t(x) - F_{t-1}(x) = \frac{1}{t+1}\left[f_t(x) - F_{t-1}(x)\right]
	\quad\hbox{for all }t\ge1.
\end{align*}
Hence,
\begin{align}\label{eq:span_bound}
       \Big\| F_1 - F_{0} \Big\|_{\rm s} \le \frac{1}{2} 
	\left\|  f_1 \right\|_{\rm s}\quad\hbox{and}\quad
	\Big\| F_t - F_{t-1} \Big\|_{\rm s} \le \frac{1}{t+1} 
	\left( \left\|  f_t \right\|_{\rm s} + \Big\| F_{t-1} \Big\|_{\rm s} \right)\qquad\hbox{for all $t\ge2$}.
\end{align}
Now, using the definition of $F_{t-1}$ and the relation $\|f\|_{\rm s}\le 2\|f\|_\infty$,
valid for any function $f$ on $\sX$, we have
\begin{align*}
	\bigg\| F_{\ell} \bigg\|_{\rm s} \le 
	\frac{1}{\ell+1}\sum^{\ell}_{s=1} \|f_s\|_{\rm s} 
	\le \frac{2}{\ell+1}\sum^{\ell}_{s=1}\|f_s\|_\infty.
\end{align*}	
By employing Lemma~\ref{lm:sup_norm_bound}, according to which $\|f_s\|_\infty \le |V|(\Delta+1)$,
we further obtain
\begin{align}\label{eq:discounted_span_norm_bound}
	\bigg\| F_{\ell} \bigg\|_{\rm s} \le 
	2|V|(\Delta+1)
	\qquad \hbox{ for all }\ell\ge1.
\end{align}
Using~\eqref{eq:discounted_span_norm_bound} in the expression~\eqref{eq:span_bound}, we get
\begin{align*}
	\Big\| F_t - F_{t-1} \Big\|_{\rm s} 
	\le \frac{4|V|(\Delta+1)}{t+1}.
\end{align*}
By substituting the preceding estimate into~\eqref{eq:start} we obtain, for all $t\ge1$,
$$
D(\pi_t \| \pi_{t+1}) \le \frac{ \beta^2} {8} 
\left( \frac{4|V|(\Delta+1)}{t+1} \right)^2
=2\left(\frac{\beta|V|(\Delta+1)}{t+1} \right)^2,
$$
which gives us \eqref{eq:KL_stepsize}.
}
\anr{
\paragraph{Part 3.} We show the bound for the regret $R_T(f^T)$ by first writing down
an {\em exact} expression for it and then developing a suitable upper bound. For every $t$, we can use the definition \eqref{eq:Renyi_strategy} of $\pi_{t+1}$ to write
	\begin{align*}
		\beta f_t(x) 
		&= \ln \mu_0(x) + t\ln \pi_t(x) 
		    - \left( t + 1\right)\left( \ln Z_{t+1} + \ln \pi_{t+1}(x)\right).
	\end{align*}
Therefore, for any $\nu \in \cP(\sX)$,
\begin{align*}
	\ell_t(\nu) 
	&= \beta\ave{\nu,f_t} + D(\nu \| \mu_0) \\
	&=  \Ave{\nu,\beta f_t + \ln \frac{\nu}{\mu_0}} \\
	&= \Ave{\nu, \ln \mu_0 
		+t\ln \pi_t - \left(t+1\right)\left( \ln Z_{t+1} 
		+ \ln \pi_{t+1}\right) + \ln \frac{\nu}{\mu_0}} \\
	&= \Ave{\nu, t \ln \frac{\pi_t}{\pi_{t+1}} 
	       + \ln \frac{\nu}{\pi_{t+1}}} - \left(t+1\right)\ln Z_{t+1} \\
	&= t\Ave{\nu, \ln \frac{\nu}{\pi_{t+1}} - \ln \frac{\nu}{\pi_t}} 
		+ \Ave{\nu, \ln \frac{\nu}{\pi_{t+1}}} - \left(t+1\right)\ln Z_{t+1}\\
	&= \left[ \left(t+1 \right)D(\nu \| \pi_{t+1}) - t D(\nu \| \pi_t)\right] 
		- \left( t+1\right) \ln Z_{t+1}.
\end{align*}
In particular, letting $\nu = \pi_t$ and using the fact that $D(\nu \| \nu) = 0$ for all $\nu$, we get
\begin{align*}
	\ell_t(\pi_t) &=  \left(t+1 \right)\left[D(\pi_t \| \pi_{t+1}) - \ln Z_{t+1}\right].
\end{align*}
Therefore, summing from $t=1$ to $t=T$ and telescoping, we obtain
\begin{align*}
 \sum^T_{t=1} \left[\ell_t(\pi_t) -  \ell_t(\nu) \right] 
 &= \sum^T_{t=1} \left(t+1\right) D(\pi_t \| \pi_{t+1}) 
 + \sum^T_{t=1}\left[t D(\nu \| \pi_t) 
         - \left(t+1\right)D(\nu \| \pi_{t+1})\right] \\
 &= \sum^T_{t=1} \left(t+1\right) D(\pi_t \| \pi_{t+1}) 
 + D(\nu \| \pi_1)  - (T+1) D(\nu \| \pi_{T+1}).
\end{align*}
Using the fact that $\pi_1 = \mu_0$ and $D(\nu \| \pi_{T+1})\ge0$, we can further bound the regret as follows:
\begin{align}\label{eq:Renyi_regret_bound}
	R_T(f^T) \le 
	\sum^T_{t=1} \left(t+1\right) D(\pi_t \| \pi_{t+1}) 
	+ \ln \frac{1}{\theta},
\end{align}
where we have used the inequality 
\begin{align*}
	D(\nu \| \mu_0) = \Ave{\nu, \ln \frac{\nu}{\mu_0}} \le |V| \ln \frac{1}{\theta}
\qquad\hbox{ for any $\nu \in \cP(\sX)$,}
\end{align*}
which holds since $\mu_0(x) > 0$ for all $x \in \sX$.
Upon substituting the bound for  $D(\pi_t \| \pi_{t+1})$ from Part 2 into~\eqref{eq:Renyi_regret_bound},
we obtain
\begin{align*}
R_T(f^T) &\le 
	\sum^T_{t=1} 2\left(t+1\right)\left(\frac{\beta|V|(\Delta+1)}{t+1}\right)^2
		+ \ln \frac{1}{\theta}\cr
		&=2\left(\beta|V|(\Delta+1) \right)^2
		\sum^T_{t=1} \frac{1}{t} 
		+ \ln \frac{1}{\theta} \\
		&\le  2\left(\beta |V|(\Delta+1)\right)^2 \ln (T+1) + \ln \frac{1}{\theta},
\end{align*}
where the last inequality follows from 
\[\sum^T_{t=1} \frac{1}{t+1}\le\int_1^{T+1}\frac{\d t}{t} = \ln (T+1).\]
Since this bound holds uniformly in all $f_1,\ldots,f_T \in \cF$, we get~\eqref{eq:Renyi_regret}.
}

\subsection{Proof of Theorem~\ref{thm:LI_regret_bound}}

Before proceeding with the formal proof, let us briefly outline the intuition behind it. \an{The underlying idea is to express the regret $R^{\rm LI}_T(f^T)$ for the decentralized local-interaction strategy $\{\mu_t\}^T_{t=0}$ as the sum of the regret $R_T(f^T)$ for the centralized strategy $\{\pi_t\}^T_{t=0}$ and the extra cost due to decentralization. Theorem~\ref{prop:OLP} provides a bound for $R_T(f^T)$, and the main effort of the proof is in establishing a bound on the total decentralization cost incurred over the time horizon $T$. In turn, this total decentralization cost depends on the distances between the centralized action profile distribution $\pi_t$ and its centralized counterpart $\mu_t$ for $t=1,\ldots,T$. These distances turn out to be small due to the use of the Glauber dynamics.}

More specifically, consider the centralized strategy $\{\pi_t\}^{T+1}_{t=0}$ given in \eqref{eq:Renyi_strategy}. By construction of the local update rules in \eqref{eq:one_site_update}, each ``global'' probability measure $\pi_t$ is invariant with respect to the Markov transition kernel $\Prob_t$ given in \eqref{eq:Glauber}. Moreover, as the relative entropy bound \eqref{eq:KL_stepsize} shows, the probability measures $\pi_t$ and $\pi_{t+1}$ are close for every $t$. Finally, we will show that, under the condition $\Delta \beta < 1$, the conditional distributions $\Prob_t(\cdot|x)$ and $\Prob_t(\cdot|y)$ will be close whenever the action profiles $x$ and $y$ are close. As we will demonstrate shortly, these three properties together ensure that, at each time step $t$, the decentralized action profile distribution $\mu_t = \Prob_t \Prob_{t-1} \ldots \Prob_1 \mu_0$ will be close to its centralized counterpart $\pi_t$. On a ``big picture'' level, this argument is similar in spirit to the one used by Narayanan and Rakhlin \cite{Narayanan_Rakhlin_RW} to construct and analyze efficient algorithms for centralized online minimization of a sequence of linear functions on a compact convex subset of a finite-dimensional Euclidean space. However, here we are interested in {\em decentralized} algorithms for {\em discrete} optimization. Moreover, the overall proof in \cite{Narayanan_Rakhlin_RW} is rather technical, drawing on ideas from the Riemannian geometry of interior-point optimization algorithms \cite{Nesterov_Todd_Riemannian_IP} and random walks on convex bodies \cite{Lovasz_Simonovits}. By contrast, our proof is much simpler, and relies on the  notion of {\em positive Ricci curvature} of a Markov chain recently introduced by Ollivier \cite{Ollivier_Ricci} (the reader is invited to consult a recent paper by Joulin and Ollivier \cite{Joulin_Ollivier_MCMC} for examples of how Ricci curvature ideas can be used to get sharp estimates of convergence rates of MCMC algorithms).

To separate out the key ideas underlying our proof, we have split this section into three parts. The first part (Section~\ref{ssec:positive_Ricci}) uses the notion of Ricci curvature of Markov chains to obtain uniform error bounds for sampling from a time-varying sequence of probability measures. Because the results of this part may be of independent interest, we formulate them in a much more general setting of complete separable metric spaces. The second part (Section~\ref{ssec:Glauber_Ricci}) applies these results to the time-varying Glauber dynamics \eqref{eq:one_site_update}--\eqref{eq:Glauber}. Once all the necessary ingredients are in place, we complete the proof of Theorem~\ref{thm:LI_regret_bound} in the last part (Section~\ref{ssec:complete_proof}).

\subsubsection{Positive Ricci curvature and sampling from a time-varying sequence of probability measures}
\label{ssec:positive_Ricci}

Let $(\sX,\rho)$ be a complete separable metric space (i.e., a Polish space) equipped with the $\sigma$-algebra $\cB(\sX)$ of its Borel subsets. A {\em Markov transition kernel} on $\sX$ is a mapping $\Prob(\cdot|\cdot) : \cB(\sX) \times \sX \to [0,1]$, such that (i) $\Prob(\cdot|x)$ is a probability measure on $\sX$ for all $x$ and (ii) the mapping $x \mapsto \Prob(A|x)$ is measurable for every $A \in \cB(\sX)$. We define the action of a Markov kernel $\Prob$ on a probability measure $\mu \in \cP(\sX)$ as
\begin{align*}
	\Prob\mu(A) \deq \int_\sX \Prob(A|x)\mu(\d x),
\end{align*}
and we say that $\mu$ is {\em $\Prob$-invariant} if $\mu = \Prob\mu$. The {\em $L^1$ Wasserstein distance} (or {\em transportation distance}) \cite{Villani_topics} between probability measures $\mu,\nu \in \cP(\sX)$ is defined as
\begin{align}
	W_1(\mu,\nu) &\deq \inf_{\upsilon \in C(\mu,\nu)} \int_{\sX \times \sX} \rho(x,y) \upsilon(\d x, \d y), \label{eq:Wasserstein}
\end{align}
where $C(\mu,\nu)$ denotes the collection of all {\em couplings} of $\mu$ and $\nu$, i.e., all probability measures $\upsilon$ on $\sX \times \sX$ with marginals $\mu$ and $\nu$. An important {\em Kantorovich--Rubinstein theorem} (see, e.g., \cite[Theorem~1.14]{Villani_topics}) gives a variational representation of $W_1(\mu,\nu)$:
\begin{align}\label{eq:KR}
	W_1(\mu,\nu) = \sup_{f : \| f \|_{\rm Lip} \le 1} \left| \int_\sX f \d\mu - \int_\sX f \d\nu \right|,
\end{align}
where the supremum is over all real-valued functions $f$ on $\sX$ with Lipschitz constant
\begin{align*}
	\|f\|_{\rm Lip} \deq \sup_{x \neq y} \frac{|f(x)-f(y)|}{\rho(x,y)} \le 1.
\end{align*}
\begin{remark}\label{rem:TV_Wasserstein} {\em When $\rho$ is the {\em trivial metric}, i.e., $\rho(x,y) = \1_{\{x \neq y\}}$, the Wasserstein distance is equal to the total variation distance: for any $\mu,\nu \in \cP(\sX)$:
	\begin{align}\label{eq:TV_coupling}
		\| \mu - \nu \|_{\rm TV} = \inf_{\upsilon \in C(\mu,\nu)} \int_{\sX \times \sX}\1_{\{x \neq y\}} \upsilon(\d x, \d y)
	\end{align}
Moreover, for any two $\mu,\nu \in \cP(\sX)$, we can construct the so-called {\em optimal coupling} $\upsilon^\star \in C(\mu,\nu)$ that achieves the infimum in \eqref{eq:TV_coupling} (see, e.g., \cite[Section~4.2]{Levin_Peres_Wilmer}).}
\end{remark}

Fix a Markov kernel $\Prob$ on $\sX$. Following Ollivier \cite{Ollivier_Ricci}, we say that $\Prob$ has {\em positive Ricci curvature} if there exists some $\kappa \in (0,1]$, such that
\begin{align}\label{eq:Ricci_curvature}
	W_1\big(\Prob(\cdot|x),\Prob(\cdot|y)\big) \le (1-\kappa)\rho(x,y), \qquad \forall x,y \in \sX.
\end{align}
We will denote the supremum of all such $\kappa$ by $\Ric(\Prob)$ and call this number the Ricci curvature of $\Prob$. The following contraction inequality \cite[Proposition~20]{Ollivier_Ricci} is key: \eqref{eq:Ricci_curvature} holds for $\Prob$ with some $\kappa \in (0,1]$ if and only if
\begin{align}\label{eq:Ricci_contraction}
	W_1(\Prob\mu, \Prob\nu) \le (1-\kappa) W_1(\mu,\nu), \qquad \forall \mu,\nu \in \cP(\sX).
\end{align}
We are now ready to develop our main technical tool:

\anr{
\begin{lemma}\label{lm:Ricci_steps} Let $\Prob_1,\Prob_2,\ldots$ be a sequence of Markov kernels on $\sX$ with the following properties:
	\begin{itemize} 
		\item[(i)] 
		Each $\Prob_t$ has a unique invariant distribution $\pi_t$ and there exists some $\delta_t \in [0,1)$, such that
		\begin{align}\label{eq:small_steps}
			W_1(\pi_t,\pi_{t+1}) \le \delta_t, \qquad t = 1,\ldots,T.
		\end{align}
		\item[(ii)] 
		The Ricci curvatures of the $\Prob_t$'s are uniformly bounded from below 
		by some $\kappa^\star\in(0,1) $:
	\begin{align*}
		\Ric(\Prob_t) \ge \kappa^\star, \qquad t = 1,2,\ldots.
	\end{align*}
\end{itemize}
\an{Given a probability measure $\mu_1 \in \cP(\sX)$, let $\{\mu_t\}$ be a sequence of probability measures defined recursively via $\mu_{t+1} = \mu_t \Prob_t$.} Then, we have
\begin{align}\label{eq:Ricci_error_bound}
	W_1(\mu_t,\pi_t) \le (1-\kappa^\star)^{t-1} W_1(\mu_1,\pi_1) 
	+ \1_{\{t\ge 2\}}\sum_{s=1}^{t-1} (1-\kappa^\star)^{t-1-s}\delta_{s}, \qquad t \ge 1.
\end{align}
\end{lemma}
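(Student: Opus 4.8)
The plan is to prove the bound \eqref{eq:Ricci_error_bound} by induction on $t$, combining the triangle inequality for $W_1$ (which holds since $W_1$ is a metric on $\cP(\sX)$) with the two hypotheses of the lemma: the $\Prob_t$-invariance of each $\pi_t$, which lets me insert a single contraction step, and the small-step bound \eqref{eq:small_steps}. For the base case $t=1$, the indicator $\1_{\{t\ge 2\}}$ vanishes and $(1-\kappa^\star)^0 = 1$, so both sides of \eqref{eq:Ricci_error_bound} reduce to $W_1(\mu_1,\pi_1)$ and the bound holds with equality.

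For the inductive step I would assume the bound at time $t$ and first establish the single-step estimate
\begin{align*}
	W_1(\mu_{t+1},\pi_{t+1}) \le (1-\kappa^\star) W_1(\mu_t,\pi_t) + \delta_t.
\end{align*}
To obtain it, split the distance by the triangle inequality,
\begin{align*}
	W_1(\mu_{t+1},\pi_{t+1}) \le W_1(\mu_{t+1},\pi_t) + W_1(\pi_t,\pi_{t+1}),
\end{align*}
and bound the second term by $\delta_t$ via \eqref{eq:small_steps}. For the first term I would exploit that $\pi_t$ is $\Prob_t$-invariant, so $\pi_t = \pi_t\Prob_t$, and write
\begin{align*}
	W_1(\mu_{t+1},\pi_t) = W_1(\mu_t\Prob_t,\pi_t\Prob_t) \le (1-\kappa^\star) W_1(\mu_t,\pi_t),
\end{align*}
where the inequality is the contraction property \eqref{eq:Ricci_contraction} applied with $\Ric(\Prob_t) \ge \kappa^\star$, using $1-\Ric(\Prob_t) \le 1-\kappa^\star$.

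Finally I would substitute the inductive hypothesis for $W_1(\mu_t,\pi_t)$ into this single-step estimate and simplify: multiplying the hypothesis through by $(1-\kappa^\star)$ raises each power by one (sending the exponent $t-1-s$ to $t-s$), and the extra summand $\delta_t = (1-\kappa^\star)^{(t+1)-1-t}\delta_t$ extends the summation to include the $s=t$ term, yielding exactly the claimed bound at time $t+1$. I do not expect a genuine obstacle here, since this is a standard contraction-plus-perturbation recursion; the only points requiring care are the correct use of invariance to produce the common kernel $\Prob_t$ in the contraction step, and the index bookkeeping in the geometric sum (in particular the $t=1 \to t=2$ transition, where the indicator $\1_{\{t\ge 2\}}$ first switches on).
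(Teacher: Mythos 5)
Your proposal is correct and follows essentially the same route as the paper's proof: triangle inequality to split off $W_1(\pi_t,\pi_{t+1})\le\delta_t$, then the $\Prob_t$-invariance of $\pi_t$ combined with the contraction inequality \eqref{eq:Ricci_contraction} to get the one-step recursion $W_1(\mu_{t+1},\pi_{t+1}) \le (1-\kappa^\star)W_1(\mu_t,\pi_t) + \delta_t$, followed by induction. The index bookkeeping you describe matches the paper's conclusion exactly.
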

}
\anr{
\begin{proof} By inspection we can see that relation~\eqref{eq:Ricci_error_bound} holds for $t=1$. 
Next, note that for any $t\ge0$ we have
	\begin{align}
		W_1(\mu_{t+1},\pi_{t+1}) &\le W_1(\mu_{t+1},\pi_t) + W_1(\pi_t,\pi_{t+1}) \label{eq:Ricci_step1} \\
		&= W_1(\Prob_t \mu_t , \Prob_t  \pi_t) + W_1(\pi_t,\pi_{t+1}) \label{eq:Ricci_step2}\\
		&\le (1-\kappa^\star) W_1(\mu_t,\pi_t) + \delta_t, \label{eq:Ricci_step3}
	\end{align}
	where \eqref{eq:Ricci_step1} is by the triangle inequality, 
	\eqref{eq:Ricci_step2} uses the recursive definition of the $\mu_t$'s and the $\Prob_t$-invariance of $\pi_t$, and \eqref{eq:Ricci_step3} uses the contraction inequality \eqref{eq:Ricci_contraction} and the assumption \eqref{eq:small_steps}. 
	Thus, by induction on $t$, we can find that for all $t\ge1$,
	\begin{align*}
	W_1(\mu_{t+1},\pi_{t+1}) &\le (1-\kappa^\star)^t W_1(\mu_1,\pi_1) 
	+\sum_{s=1}^t (1-\kappa^\star)^{t-s}\delta_{s},
	\end{align*}
	which shows~\eqref{eq:Ricci_error_bound} for $t\ge 2$. 
\end{proof}
}
\anr{
\begin{corollary}\label{cor:Lipschitz_averages} 
Under the assumptions of Lemma~\ref{lm:Ricci_steps}, for any Lipschitz function $f : \sX \to \Reals$ we have
	\begin{align*}
		\left| \int_\sX f \d\mu_t - \int_\sX f \d\pi_t \right| 
		\le \| f \|_{\rm Lip} \left((1-\kappa^\star)^{t-1} W_1(\mu_1,\pi_1) 
	+ \1_{\{t\ge 2\}}\sum_{s=1}^{t-1} (1-\kappa^\star)^{t-1-s}\delta_{s}\right), \qquad t= 1,2\ldots.
	\end{align*}
\end{corollary}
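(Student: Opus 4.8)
The plan is to read off the bound directly from the Kantorovich--Rubinstein variational characterization \eqref{eq:KR} of the Wasserstein distance, combined with the estimate on $W_1(\mu_t,\pi_t)$ already furnished by Lemma~\ref{lm:Ricci_steps}. No new machinery is required: this corollary is essentially a translation of the lemma into the language of Lipschitz integrals, so the substantive work has all been done in establishing the contraction estimate \eqref{eq:Ricci_error_bound}.

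First I would dispose of the degenerate case $\|f\|_{\rm Lip}=0$, in which $f$ is constant, so that both integrals coincide and both sides of the claimed inequality vanish. Assuming $\|f\|_{\rm Lip}>0$, I would normalize by setting $g \deq f/\|f\|_{\rm Lip}$, which has $\|g\|_{\rm Lip}=1$. Since $g$ is admissible in the supremum defining $W_1$ via \eqref{eq:KR}, we have
\[
	\left|\int_\sX g\,\d\mu_t - \int_\sX g\,\d\pi_t\right| \le W_1(\mu_t,\pi_t),
\]
and multiplying through by $\|f\|_{\rm Lip}$ and using linearity of the integral gives
\[
	\left|\int_\sX f\,\d\mu_t - \int_\sX f\,\d\pi_t\right| \le \|f\|_{\rm Lip}\, W_1(\mu_t,\pi_t).
\]

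Finally, I would substitute the bound \eqref{eq:Ricci_error_bound} from Lemma~\ref{lm:Ricci_steps} for $W_1(\mu_t,\pi_t)$ on the right-hand side, which yields precisely the asserted inequality, valid for all $t \ge 1$. I do not expect any genuine obstacle in this argument: the only two points requiring any care are the handling of the constant case and the observation that the Kantorovich--Rubinstein identity, as stated for unit-Lipschitz functions, extends to an arbitrary Lipschitz $f$ by positive homogeneity of the left-hand side in $f$. In short, the corollary is an immediate consequence of \eqref{eq:KR} and Lemma~\ref{lm:Ricci_steps}, and carries no independent difficulty.
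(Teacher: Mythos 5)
Your proof is correct and takes exactly the same route as the paper's own (one-line) proof: apply the Kantorovich--Rubinstein formula \eqref{eq:KR} and then substitute the bound \eqref{eq:Ricci_error_bound} from Lemma~\ref{lm:Ricci_steps}. The additional details you supply (the constant case and the homogeneity normalization) merely spell out what the paper leaves implicit.
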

\begin{proof} Use \eqref{eq:Ricci_error_bound} and the Kantorovich--Rubinstein formula \eqref{eq:KR}.
\end{proof}
}

\mr{
\begin{remark}{\em In the special case when $\delta_t = \delta$ for all $t$, the bounds of Lemma~\ref{lm:Ricci_steps} and Corollary~\ref{cor:Lipschitz_averages} become
	\begin{align*}
		W_1(\mu_t,\pi_t) \le \frac{\delta}{1-\kappa^\star}
	\end{align*}
	and
	\begin{align*}
	\left| \int_\sX f \d\mu_t - \int_\sX f \d\pi_t \right| \le \frac{\| f \|_{\rm Lip} \delta}{1-\kappa^\star}
	\end{align*}
respectively.}\end{remark}}

\subsubsection{Positive Ricci curvature of the time-varying Glauber dynamics}
\label{ssec:Glauber_Ricci}

We now particularize these results to our setting, where $\sX$ is the space of all tuples $x = \left( x_v : v \in V\right)$ equipped with the {\em Hamming distance}
\begin{align*}
	\rho_{\rm H}(x,y) \deq \sum_{v \in V}\1_{\{x_v \neq y_v\}}.
\end{align*}
In this case, the Ricci curvature bounds are equivalent to the so-called {\em path coupling} bounds of Bubley and Dyer \cite{Bubley_Dyer_path_coupling} (see also \cite[Chapter~14]{Levin_Peres_Wilmer} and \cite[Example~17]{Ollivier_Ricci}). In particular, in order to obtain a lower bound on the Ricci curvature of a given Markov kernel $\Prob$, it suffices to consider only those $x, y \in \sX$ with $\rho_{\rm H}(x,y) = 1$. Indeed, suppose that we can find some $\kappa \in (0,1]$, such that
\begin{align}\label{eq:Ricci_one}
	W_1\big(\Prob(\cdot|x),\Prob(\cdot|y)\big) \le 1-\kappa
	\qquad\hbox{for all $x,y$ with $\rho_{\rm H}(x,y)=1$. }
\end{align}
Then $\Ric(\Prob) \ge \kappa$. To see this, consider any pair $x, y \in \sX$ with $\rho_{\rm H}\an{(x,y)} = k$. 
Then, there exists a sequence $x_1,\ldots,x_{k+1} \in \sX$, such that $x_1 = x$, $x_{k+1} = y$, and $\rho_{\rm H}(x_j,x_{j+1})=1$ for all $1 \le j \le k$. Using this fact, we can write
\begin{align*}
	W_1\big(\Prob(\cdot|x),\Prob(\cdot|y)\big) &= W_1\big(\Prob(\cdot|x_1),\Prob(\cdot|x_{k+1})\big) \\
	&\le \sum^{k}_{j=1} W_1\big( \Prob(\cdot|x_j), \Prob(\cdot|x_{j+1})\big) \\
	&\le (1-\kappa)k \\
	&= (1-\kappa)\rho_{\rm H}(x,y),
\end{align*}
where the second step follows from the triangle inequality and the third step follows from \eqref{eq:Ricci_one}. Using this observation, we can prove the following:

\anr{
\begin{lemma}\label{lm:Ricci_bound} 
Let $\Prob_1,\ldots,\Prob_{T+1}$ be the Markov kernels on $\sX$ given by \eqref{eq:Glauber}, and 
let $\pi_1,\ldots,\pi_{T+1} \in \cP(\sX)$ be the Gibbs measures defined in \eqref{eq:pi_t}. 
Suppose that the parameter $\beta>0$ satisfies $\beta \Delta < 1$. Then, the conditions of Lemma~\ref{lm:Ricci_steps} are satisfied with
	\begin{align}\label{eq:W_stepsize}
		\delta_t = \frac{\beta|V|^2(\Delta+1)}{t+1}
	\end{align}
	and
	\begin{align}\label{eq:Glauber_Ricci_bound}
		\kappa^\star = \frac{1-\Delta \beta}{|V|}.
	\end{align}
\end{lemma}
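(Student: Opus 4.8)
The plan is to verify the two conditions of Lemma~\ref{lm:Ricci_steps} separately for the time-varying Glauber kernels $\Prob_t$. Condition~(ii), the uniform Ricci curvature bound, will be established via the path-coupling reduction just described: it suffices to bound $W_1(\Prob_t(\cdot|x),\Prob_t(\cdot|y))$ for $x,y$ with $\rho_{\rm H}(x,y)=1$, say differing only in the coordinate at a single vertex $w \in V$. Since the kernel $\Prob_t$ in \eqref{eq:Glauber} activates a uniformly random vertex $U_t$ and then resamples only that vertex's coordinate from the local Gibbs rule \eqref{eq:one_site_update}, I would couple the two chains by using the \emph{same} activated vertex $v$ in both. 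When the activated vertex is $w$ itself, both chains resample $X_w$ from local rules that may differ; when $v \notin \partial_+ w$, the local rules coincide and the coupled coordinates stay equal; when $v \in \partial w$, the two local rules at $v$ differ because the boundary condition $x_{\partial v}$ differs in the single coordinate $w$. I would then use the optimal-coupling characterization of total variation (Remark~\ref{rem:TV_Wasserstein}) at each activated vertex and bound the resulting expected Hamming displacement.

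The key estimate is the total-variation distance between the two local Gibbs rules at a vertex $v$ adjacent to $w$. Writing the local rule \eqref{eq:one_site_update} as a Gibbs measure $\mu_g$ with $g = -\beta F_{v,t}(\cdot,x_{\partial v})$, a change in the single neighboring coordinate $x_w$ changes $g$ by the term $-\beta \psi_{w,v,t}$ summed appropriately; since $\psi \in [-1,1]$ and $F_{v,t}$ is a convex average of such $\psi_{w,v,s}$, the span seminorm of the perturbation is at most $2\beta$ (the $\tfrac{1}{t+1}\sum_s$ averaging leaves the per-term bound intact). Applying the total-variation bound \eqref{eq:TV_Gibbs} from Lemma~\ref{lm:Gibbs} then gives a TV distance of at most $\tfrac{1}{4}\cdot 2\beta = \tfrac{\beta}{2}$ between the two local rules. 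Accounting for the $1/|V|$ probability of activating each vertex, the expected Hamming distance after one step is at most the contribution $\tfrac{1}{|V|}(1-\text{something})$ from activating $w$ plus $\tfrac{1}{|V|}\sum_{v \in \partial w}(\text{TV}) \le \tfrac{1}{|V|}\Delta\cdot\tfrac{\beta}{2}$ from activating neighbors, so that $W_1(\Prob_t(\cdot|x),\Prob_t(\cdot|y)) \le 1 - \tfrac{1-\Delta\beta}{|V|}$, yielding $\kappa^\star = \tfrac{1-\Delta\beta}{|V|}$ exactly as in \eqref{eq:Glauber_Ricci_bound}; the condition $\Delta\beta < 1$ is precisely what makes this positive.

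For condition~(i), the small-step bound $W_1(\pi_t,\pi_{t+1}) \le \delta_t$, I would pass from the relative-entropy drift already controlled in Part~2 of Proposition~\ref{prop:OLP} to a Wasserstein bound. Under the Hamming metric, $W_1$ is dominated by $|V|$ times the total-variation distance, and total variation is controlled via the CKKP inequality \eqref{eq:CKKP} by $\sqrt{\tfrac12 D(\pi_t\|\pi_{t+1})}$. Combining \eqref{eq:KL_stepsize} with \eqref{eq:CKKP} gives $\|\pi_t-\pi_{t+1}\|_{\rm TV} \le \tfrac{\beta|V|(\Delta+1)}{t+1}$, and multiplying by $|V|$ would give the stated $\delta_t = \tfrac{\beta|V|^2(\Delta+1)}{t+1}$ in \eqref{eq:W_stepsize}. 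The main obstacle, and the step requiring the most care, is the coupling bookkeeping in condition~(ii): one must correctly track the contribution of the activated vertex being $w$ (where the displacement can actually \emph{decrease} through coalescence) versus a neighbor of $w$ (where displacement can be created), and verify that the per-term span bound survives the time-averaging in $F_{v,t}$ so that the constant in front of $\beta$ is exactly right. The $W_1$-to-TV passage and the invocation of Lemma~\ref{lm:Gibbs} are comparatively routine once the coupling is set up correctly.
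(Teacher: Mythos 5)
Your overall route is the same as the paper's: path coupling with a common, uniformly chosen update vertex plus the Gibbs total-variation bound \eqref{eq:TV_Gibbs} for the curvature, and the chain $W_1 \le |V|\cdot\|\cdot\|_{\rm TV}$ together with the CKKP inequality \eqref{eq:CKKP} and the drift bound \eqref{eq:KL_stepsize} for the step-size bound \eqref{eq:W_stepsize}; that second part of your argument is correct as stated. However, there are genuine problems in your treatment of the curvature bound. First, in the case where the activated vertex is the disagreement vertex $w$ itself, you say the two local rules ``may differ.'' They cannot: the local rule \eqref{eq:one_site_update} at $w$ depends on the configuration only through the boundary condition $x_{\partial w}$, and $x_{\partial w} = y_{\partial w}$ because $x$ and $y$ differ only at $w$. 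Hence the coupled chains coalesce with probability one when $w$ is activated, and this certain coalescence is the \emph{sole} source of contraction: the term $0\cdot\Pr(v=w)$ is what produces the $-1$ in $1-\frac{1-\Delta\eta}{|V|}$. With your vague accounting ``$\frac{1}{|V|}(1-\text{something})$'' the claimed constant $\kappa^\star = \frac{1-\Delta\beta}{|V|}$ is not recoverable unless you prove that ``something'' equals $1$, which is exactly the coalescence statement you did not assert.

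Second, your span estimate at a neighbor $v \in \partial w$ is wrong by a factor of two. The perturbation of the exponent is the time average over $s$ of $-\beta\left[\psi_{w,v,s}(x_w,\cdot)-\psi_{w,v,s}(y_w,\cdot)\right]$: a \emph{difference} of two functions, each taking values in $[-1,1]$, so its span seminorm can be as large as $4$ (not $2$), and the averaging preserves but does not improve this bound. Lemma~\ref{lm:Gibbs} then gives $\eta \le \beta$, not $\beta/2$, and it is precisely $\eta \le \beta$ that yields $1 - \frac{\Delta+1}{|V|} + \frac{\Delta(1+\beta)}{|V|} = 1 - \frac{1-\Delta\beta}{|V|}$; your stated $\beta/2$ would have produced the strictly stronger curvature $\frac{1-\Delta\beta/2}{|V|}$, and this mismatch with \eqref{eq:Glauber_Ricci_bound} should have flagged the error. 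Finally, a smaller omission: condition (i) of Lemma~\ref{lm:Ricci_steps} also requires each $\pi_t$ to be the invariant distribution of $\Prob_t$; this is where the detailed-balance property \eqref{eq:reversibility} of the Glauber dynamics (proved in Appendix~\ref{app:Gibbs_sampler}) must be invoked, and your outline never does so.
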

}
\begin{proof}
The fact that each $\pi_t$ is invariant with respect to $\Prob_t$ follows from the detailed balance property \eqref{eq:reversibility}. To keep the paper relatively self-contained, we give in Appendix~\ref{app:Gibbs_sampler} a short proof of \eqref{eq:reversibility} as a consequence of a more general result on the Gibbs sampler.

	To upper-bound the Wasserstein distance $W_1(\pi_t,\pi_{t+1})$, we write
	\begin{align}
		W_1(\pi_t,\pi_{t+1}) &= \inf_{\upsilon \in C(\pi_t,\pi_{t+1})} \int_{\sX \times \sX} \rho_{\rm H}(x,y) \upsilon(\d x, \d y) \nonumber \\
		&\le |V| \int_{\sX \times \sX} \1_{\{x \neq y\}} \upsilon(\d x, \d y) \nonumber \\
		&= |V| \cdot \| \pi_t - \pi_{t+1} \|_{\rm TV}, \label{eq:W_stepsize_0}
	\end{align}
where in the first line we have used the definition \eqref{eq:Wasserstein} of the Wasserstein distance, 
while the last step follows from the coupling representation \eqref{eq:TV_coupling} of the total variation distance. 
\anr{Furthermore, using the CKKP inequality \eqref{eq:CKKP} 
and the relative-entropy bound \eqref{eq:KL_stepsize}, we get
	\begin{align*}
		\left\| \pi_t - \pi_{t+1} \right \|_{\rm TV} \le \sqrt{\frac{1}{2} D(\pi_t\|\pi_{t+1})}
		\le \frac{\beta|V|(\Delta+1)}{t+1}
		  \qquad\hbox{ for all $t\ge1$}.
	\end{align*}}
	Using this bound in \eqref{eq:W_stepsize_0}, we get \eqref{eq:W_stepsize}.

	Finally, we obtain a uniform lower bound on the Ricci curvature of the $\Prob_t$'s. Each $\Prob_t$ is of the form
	\begin{align*}
		\Prob_t(y|x) = \frac{1}{|V|}\sum_{v \in V}\Prob_{v,t}(y_v|x_{\partial v}) \1_{\{y_{-v} = x_{-v}\}},
	\end{align*}
	\anr{
	where
	\begin{align}\label{eq:local_Gibbs}
		\Prob_{v,t}(y_v|x_{\partial v}) 
		= \frac{\mu_{v,0}(y_v)\exp\left( -\beta F_{v,t-1}(y_v,x_{\partial v})\right)}{Z_{v,t}(x_{\partial v})}.
	\end{align}
	}
Recalling the discussion preceding the statement of the lemma, we only need to consider pairs $x,y$ with $\rho_{\rm H}(x,y)=1$. Fix such a pair $x,y$, and let $u \in V$ denote the single vertex at which they differ. We will construct a suitable coupling of $\Prob_t(\cdot|x)$ and $\Prob_t(\cdot|y)$. We define a random couple $(\bar{X},\bar{Y}) \in \sX \times \sX$ as follows. Select a vertex $v \in V$ uniformly at random. There are three cases to consider:
	\begin{itemize}
		\item If $v = u$, then $x_{-v} = x_{-u} = y_{-u} = y_{-v}$ and a fortiori
		\begin{align*}
			\Prob_{v,t}(\cdot|x_{\partial v}) = \Prob_{v,t}(\cdot|y_{\partial v}).
		\end{align*}
		In this case, we draw a random sample $A$ from $\Prob_{u,t}(\cdot|x_{\partial u})$ and let $\bar{X}_u = \bar{Y}_u = A$, $\bar{X}_{-u} = x_{-u}$, $\bar{Y}_{-u} = y_{-u}$. Then $\rho_{\rm H}(\bar{X},\bar{Y}) = 0$.
		\item If $v \in \partial u$, then we sample $(\bar{X}_v,\bar{Y}_v)$ from the optimal coupling of $\Prob_{v,t}(\cdot|x_{\partial v})$ and $\Prob_{v,t}(\cdot|y_{\partial v})$ (cf.~Remark~\ref{rem:TV_Wasserstein} in Section~\ref{ssec:positive_Ricci}), and let $\bar{X}_{-v} = x_{-v}$, $\bar{Y}_{-v} = y_{-v}$. 
		Then, \an{we have} $\bar{X}_v = \bar{Y}_v$ with probability $1-\left\| \Prob_{v,t}(\cdot|x_{\partial v})- \Prob_{v,t}(\cdot|y_{\partial v})\right\|_{\rm TV}$, in which case $\rho_{\rm H}(\bar{X},\bar{Y}) = \rho_{\rm H}(x,y)$; on the complementary event $\{\bar{X}_v \neq \bar{Y}_v\}$, the Hamming distance $\rho_{\rm H}(\bar{X},\bar{Y})$ will increase to $2$.
		\item If $v \not\in \partial_+u$, then $x_{\partial v} = y_{\partial v}$. We sample a random $A$ from $\Prob_{v,t}(\cdot|x_{\partial v}) = \Prob_{v,t}(\cdot|y_{\partial v})$ and let $\bar{X}_v = \bar{Y}_v = A$, $\bar{X}_{-v} = x_{-v}$, and $\bar{Y}_{-v} = y_{-v}$. In this case, $\rho_{\rm H}(\bar{X},\bar{Y}) = \rho_{\rm H}(x,y) = 1$.
	\end{itemize}
	Let $\bar{\upsilon}$ denote the joint probability distribution of $(\bar{X},\bar{Y})$. It is easy to show that $\bar{X}$ (respectively, $\bar{Y}$) has distribution $\Prob_t(\cdot|x)$ (respectively, $\Prob_t(\cdot|y)$). Therefore, $\bar{\upsilon}$ is an element of $C\big(\Prob_t(\cdot|x),\Prob_t(\cdot|y)\big)$. Moreover,
	\begin{align*}
	\int_{\sX \times \sX}\rho_{\rm H}\d\bar{\upsilon} &= 0 \cdot {\rm Pr}(v=u)  + 1 \cdot {\rm Pr}(v \not\in \partial_+ u) + \sum_{v' \in \partial u} \left(1+\left\| \Prob_{v',t}(\cdot|x_{\partial v'}) - \Prob_{v',t}(\cdot|y_{\partial v'}) \right\|_{\rm TV}\right) {\rm \Pr}(v=v') \\
		&\le 1 - \frac{\Delta+1}{|V|} + \frac{\Delta (1+\eta)}{|V|} \\
		&= 1 - \frac{1-\Delta\eta}{|V|},
	\end{align*}
	where
	\begin{align*}
		\eta = \max_{v \in \partial u}\left\| \Prob_{v,t}(\cdot|x_{\partial v}) - \Prob_{v,t}(\cdot|y_{\partial v}) \right\|_{\rm TV}.
	\end{align*}
	It remains to bound $\eta$ from above. To that end, we note that, for each $v \in V$, 
	both $\Prob_{v,t}(\cdot|x_{\partial v})$ and $\Prob_{v,t}(\cdot|y_{\partial v})$ are Gibbs measures, 
	cf.~\eqref{eq:local_Gibbs}. Therefore, using Lemma~\ref{lm:Gibbs}, we can write
	\anr{
	\begin{align}\label{eq:local_TV_bound}
		\left\| \Prob_{v,t}(\cdot|x_{\partial v}) - \Prob_{v,t}(\cdot|y_{\partial v}) \right\|_{\rm TV} 
		&\le \frac{\beta \left\| F_{v,t-1}(\cdot,x_{\partial v}) - F_{v,t-1}(\cdot,y_{\partial v}) \right\|_{\rm s}}{4}.
	\end{align}
	Using Lemma~\ref{lm:sup_norm_bound} in Appendix~\ref{app:misc} 
	and an argument similar to the one used to derive \eqref{eq:discounted_span_norm_bound}, we get
	\begin{align*}
		\left\| F_{v,t-1}(\cdot,x_{\partial v}) - F_{v,t-1}(\cdot,y_{\partial v}) \right\|_{\rm s} 
		&\le 2 \left\| F_{v,t-1}(\cdot,x_{\partial v}) - F_{v,t-1}(\cdot,y_{\partial v}) \right\|_\infty\cr
		&\le \frac{2}{t}\sum_{s=1}^{t-1}\|f_{v,s}(\cdot,x_{\partial v}) - f_{v,s}(\cdot,y_{\partial v})\|_\infty\cr
		& \le 4\rho_{\rm H}(x_{\partial v},y_{\partial v}).
	\end{align*}
Since $x$ and $y$ differ only at a single vertex, we have that $\rho_{\rm H}(x_{\partial v},y_{\partial v}) \le 1$. Therefore,
$$
\left\| F_{v,t-1}(\cdot,x_{\partial v}) - F_{v,t-1}(\cdot,y_{\partial v}) \right\|_{\rm s} \le 4.
$$
Note that this bound is \textit{independent} of $t$; this is a consequence of the $\frac{1}{t+1}$ scaling in Eq.~\eqref{eq:discount-fun}, which is in turn a direct consequence of the relative-entropy term in the instantaneous losses. Substituting this into \eqref{eq:local_TV_bound}, we get 
	\[\left\| \Prob_{v,t}(\cdot|x_{\partial v}) - \Prob_{v,t}(\cdot|y_{\partial v}) \right\|_{\rm TV} 
		\le \beta.\]
 Therefore, by the definition of $W_1$ it follows that
	\begin{align*}
		W_1\big( \Prob_t(\cdot|x), \Prob_t(\cdot|y)\big) \le \int_{\sX \times \sX} \rho_{\rm H}\d\bar{\upsilon} \le 1-\frac{1-\Delta \beta}{|V|},
	\end{align*}
	which in view of relation~\eqref{eq:Ricci_one}  yields \eqref{eq:Glauber_Ricci_bound}.
	}
\end{proof}

\subsubsection{Completing the proof}\label{ssec:complete_proof}
We decompose the regret $R^{\rm LI}_T(f^T)$ as follows:
\begin{align}\label{eq:LI_regret_decomposition}
	R^{\rm LI}_T(f^T) &= \sum^T_{t=1} \ell_t(\mu_t) - \inf_{\nu \in \cP(\sX)} \sum^T_{t=1}\ell_t(\nu) \nonumber\\
	&= \sum^T_{t=1}\left( \ell_t(\mu_t) - \ell_t(\pi_t)\right) + \sum^T_{t=1} \ell_t(\pi_t) - \inf_{\nu \in \cP(\sX)} \sum^T_{t=1}\ell_t(\nu) \nonumber \\
	&\le \sum^T_{t=1}\left( \ell_t(\mu_t) - \ell_t(\pi_t)\right) + R_T(f^T).
\end{align}
Next, we use the form of the instantaneous costs $\ell_t$ to expand the first summation on the right-hand side of \eqref{eq:LI_regret_decomposition}:
\begin{align}\label{eq:generic}
	\sum^T_{t=1}\left( \ell_t(\mu_t) - \ell_t(\pi_t)\right) 
	&= \sum^T_{t=1} \beta \left( \ave{\mu_t,f_t} - \ave{\pi_t,f_t}\right) 
	+ \sum^T_{t=1} \left(D(\mu_t \| \mu_0) - D(\pi_t \| \mu_0)\right).
\end{align}
By Lemma~\ref{lm:Lipschitz_bound}, each $f_t$ is Lipschitz with respect to the Hamming metric with constant 
$2 |V|(\Delta+1)$. 
\anr{Therefore, using Corollary~\ref{cor:Lipschitz_averages}, we obtain
\begin{align*}
	\ave{\mu_t,f_t} - \ave{\pi_t,f_t} &\le \| f \|_{\rm Lip} \left((1-\kappa^\star)^{t-1} W_1(\mu_1,\pi_1) 
	+ \1_{\{t\ge 2\}}\sum_{s=1}^{t-1} (1-\kappa^\star)^{t-1-s}\delta_{s}\right)\cr
	&\le 2 |V|(\Delta+1)\left((1-\kappa^\star)^{t-1} W_1(\mu_1,\pi_1) 
	+ \1_{\{t\ge 2\}}\sum_{s=1}^{t-1} (1-\kappa^\star)^{t-1-s}\delta_{s}\right).
\end{align*}
Using the expression for $\delta_t$ given in Lemma~\ref{lm:Ricci_bound}, we further obtain
\begin{align*}
	\ave{\mu_t,f_t} - \ave{\pi_t,f_t} &\le 
	 2 |V|(\Delta+1)\left((1-\kappa^\star)^{t-1} W_1(\mu_1,\pi_1) 
	+ \1_{\{t\ge 2\}}\sum_{s=1}^{t-1} (1-\kappa^\star)^{t-1-s}\beta|V|^2(\Delta+1)\gamma_{s+1}\right).
\end{align*}
Therefore,
 \begin{align}
  \sum^T_{t=1} \beta \left( \ave{\mu_t,f_t} - \ave{\pi_t,f_t}\right) 
  & \le  2 \beta|V|(\Delta+1)W_1(\mu_1,\pi_1) \sum^T_{t=1}(1-\kappa^\star)^{t-1} 
  + 2\beta^2|V|^3(\Delta+1)^2\sum^T_{t=2}\sum_{s=1}^{t-1} (1-\kappa^\star)^{t-1-s}\gamma_{s+1}\qquad\cr
  & \le 2 \beta|V|(\Delta+1)W_1(\mu_1,\pi_1) \frac{1}{\kappa^\star} 
  + 2\beta^2|V|^3(\Delta+1)^2\sum^{T-1}_{\tau=1}\sum_{s=1}^{\tau} (1-\kappa^\star)^{\tau-s}\gamma_{s+1},
  \label{eq:sec-row}
 \end{align}
  where the last inequality is obtained by using 
  \begin{align}\label{eq:easy}
  \sum^T_{t=1}(1-\kappa^\star)^{t-1} \le \sum^\infty_{t=1}(1-\kappa^\star)^{t-1} =\frac{1}{k^\star},\end{align}
  and by letting $\tau=t-1$ in the second sum over $t$. 
  By exchanging the order of summation in the last term in~\eqref{eq:sec-row} and using~\eqref{eq:easy}, 
  we have
 \[\sum^{T-1}_{\tau=1}\sum_{s=1}^{\tau} (1-\kappa^\star)^{\tau-s}\gamma_{s+1} 
 =  \sum_{t=2}^{T} \gamma_t\sum^{T-t}_{\tau=0}(1-\kappa^\star)^{\tau}
 \le \frac{1}{\kappa^\star}\sum_{t=2}^{T} \gamma_t,\]
 implying that 
 \begin{align*}
  \sum^T_{t=1} \beta \left( \ave{\mu_t,f_t} - \ave{\pi_t,f_t}\right) 
  \le 2 \beta|V|(\Delta+1)W_1(\mu_1,\pi_1) \frac{1}{\kappa^\star} 
  + 2\beta^2|V|^3(\Delta+1)^2\frac{1}{\kappa^\star} \sum_{t=2}^{T} \gamma_t.
  \end{align*}
  Since $\gamma_t=\frac{1}{t}$ for all $t \ge 1$, it follows that 
  \begin{align}\label{eq:f-term}
  \sum^T_{t=1} \beta \left( \ave{\mu_t,f_t} - \ave{\pi_t,f_t}\right) 
   & \le 2 \beta|V|(\Delta+1)W_1(\mu_1,\pi_1) \frac{1}{\kappa^\star} 
  + 2\beta^2|V|^3(\Delta+1)^2\frac{1}{\kappa^\star} \sum_{t=2}^{T} \frac{1}{t}\cr
  & \le 2 \beta|V|(\Delta+1)W_1(\mu_1,\pi_1) \frac{1}{\kappa^\star} 
  + 2\beta^2|V|^3(\Delta+1)^2\frac{1}{\kappa^\star} \ln T,
  \end{align}
  where the last inequality follows from
  \[\sum_{t=2}^{T} \frac{1}{t}\le \int_{1}^T\frac{\d t}{t} = \ln T.\]
}

\anr{
Next, we deal with the relative entropy difference term in~\eqref{eq:generic}. Given a probability distribution $\mu \in \cP(\sX)$, let $H(\mu) = - \ave{\mu, \ln \mu}$ denote its 
{\em Shannon entropy} \cite{CovTho06}. Then
\begin{align}
	D(\mu_t \| \mu_0) - D(\pi_t \| \mu_0) &= H(\pi_t) - H(\mu_t) + \Ave{\mu_t,\ln \frac{1}{\mu_0}} - \Ave{\pi_t, \ln \frac{1}{\mu_0}} \nonumber\\
	&\le |H(\pi_t)-H(\mu_t)| +  \| \pi_t - \mu_t \|_{\rm TV} \cdot |V| \ln \frac{1}{\theta_d}, \label{eq:KL_difference}
\end{align}
where $\theta_d = \min_{v \in V}\min_{a \in \{1,\ldots,q\}} \mu_{v,0}(a)$.  To upper-bound the first term in \eqref{eq:KL_difference}, we use the following continuity estimate for the Shannon entropy (see, e.g.,~\cite[Theorem~17.3.3]{CovTho06}): 
For any two $\mu,\nu \in \cP(\sX)$ with $\| \mu - \nu \|_{\rm TV} \le 1/4$,
\begin{align*}
	|H(\mu) - H(\nu)| \le 2\left( \| \mu - \nu \|_{\rm TV} \ln |\sX| +  \| \mu - \nu \|_{\rm TV} \ln \frac{1}{\| \mu - \nu \|_{\rm TV}}\right),
\end{align*}
where $|\sX| = q^{|V|}$ is the cardinality of $\sX$. In order to use this estimate, we need an upper bound on 
$\| \pi_t - \mu_t \|_{\rm TV}$, which can be obtained as follows:
\begin{align}
	\| \pi_t - \mu_t \|_{\rm TV} 
	&= \inf_{\upsilon \in C(\pi_t,\mu_t)} \int_{\sX \times \sX} \1_{\{x \neq y\}} \upsilon(\d x, \d y) \nonumber\\
	&\le \inf_{\upsilon \in C(\pi_t,\mu_t)} \int_{\sX \times \sX} \rho_{\rm H}(x,y) \upsilon(\d x, \d y) \nonumber \\
	&= W_1(\pi_t,\mu_t) \nonumber\\
	&\le (1-\kappa^\star)^{t-1} W_1(\mu_1,\pi_1) 
	+ \1_{\{t\ge 2\}}\sum_{s=1}^{t-1} (1-\kappa^\star)^{t-1-s}\delta_{s},\label{eq:TV_stepsize_bound}
\end{align}
where the last step follows from Lemma~\ref{lm:Ricci_steps} (see~\eqref{eq:Ricci_error_bound}).} \mr{We can upper-bound the Wasserstein distance $W_1(\mu_1,\pi_1)$ as follows:
\begin{align*}
	W_1(\mu_1,\pi_1) &= \inf_{\upsilon \in C(\mu_1,\pi_1)} \int_\sX \rho_{\rm H}(x,y) \upsilon(\d x, \d y) \\
	&\le |V| \inf_{\upsilon \in C(\mu_1,\pi_1)} \int_\sX \1_{\{x\neq y\}} \upsilon(\d x, \d y) \\
	&= |V| \| \mu_1 - \pi_1 \|_{\rm TV} \\
	&\le |V|.
\end{align*}
Using this and the fact that $\delta_s \le \frac{\beta |V|^2 (\Delta+1)}{s+1}$ by Lemma~\ref{lm:Ricci_bound} in \eqref{eq:TV_stepsize_bound}, we can write
\begin{align}
	\| \pi_t - \mu_t \|_{\rm TV} &\le K \sum^t_{s=1} \frac{(1-\kappa^\star)^{t-s}}{s} \nonumber\\
	&= K p_t\left(1-\kappa^\star\right),
\end{align}
where $K \equiv K(\beta,|V|,\Delta) \deq \max \left\{ |V|, \beta |V|^2 (\Delta+1) \right\}$, and $p_t(u) \deq \sum^t_{s=1} \frac{u^{t-s}}{s}$. As a consequence of Lemma~\ref{lm:recurrence} in Appendix~\ref{app:recursion}, there exists a finite $T_0 = T_0(\beta,|V|,\Delta+1)$, such that the sequence $\left\{ p_t\left(1-\kappa^\star\right)\right\}^\infty_{t=T_0}$ is strictly decreasing and convergent to zero. Therefore, there exists a finite $T_1 \equiv T_1(\beta,|V|,\Delta)$, such that
\begin{align*}
	\| \pi_t - \mu_t \|_{\rm TV} \le K p_t\left(1-\kappa^\star\right) \le \frac{1}{4}, \qquad t \ge T_1.
\end{align*}
Moreover, the function $u \mapsto -u \ln u$ is increasing on the open interval $(0,1/e)$, so for $t \ge T_1$ we have
\begin{align*}
	\| \pi_t - \mu_t \|_{\rm TV} \ln \frac{1}{\| \mu_t - \pi_t \|_{\rm TV}} &\le K p_t\left(1-\kappa^\star\right) \ln \frac{1}{K p_t\left(1-\kappa^\star\right)} \\
	&\le Kp_t\left(1-\kappa^\star\right) \ln t,
\end{align*}
where we have also used the fact that $p_t(u) \ge 1/t$.  Consequently,
\begin{align}
	D(\mu_t \| \pi_0) - D(\pi_t \| \pi_0) &\le \begin{cases}
	|V| \ln \frac{q^2}{\theta_d}, & t < T_1 \\
 	Kp_t\left(1-\kappa^\star\right)\left(|V| \ln \frac{q^2}{\theta_d} + \ln t\right),&  t \ge T_1
\end{cases}.
\end{align}
Summing from $t=1$ to $t=T$, we get
\begin{align}
	\sum^T_{t=1} \left[ D(\mu_t \| \pi_0) - D(\pi_t \| \pi_0) \right] 
	&\le T_1 |V| \ln \frac{q^2}{\theta_d} + K\left(|V| \ln \frac{q^2}{\theta_d} + \ln T\right) \sum^T_{t=1} p_t\left( 1-\kappa^\star\right) \nonumber \\
	&= T_1 |V| \ln \frac{q^2}{\theta_d} + K\left(|V| \ln \frac{q^2}{\theta_d} + \ln T\right)\sum^T_{t=1}\sum^t_{s=1} \frac{(1-\kappa^\star)^{t-s}}{s} \nonumber\\
	&\le T_1 |V| \ln \frac{q^2}{\theta_d} + K\left(|V| \ln \frac{q^2}{\theta_d} + \ln T\right)\frac{1}{\kappa^\star} \ln(T+1).\label{eq:d-term}
\end{align}
Combining \eqref{eq:f-term} and \eqref{eq:d-term}, we get
\begin{align*}
&	\sum^T_{t=1}\left[\ell_t(\mu_t) - \ell_t(\pi_t)\right] \\
&\quad\le \sum^T_{t=1}\left[\beta\left|\ave{\mu_t,f_t} - \ave{\pi_t,f_t}\right| + \left| D(\mu_t \| \mu_{0}) - D(\pi_t \| \mu_0) \right|\right] \\
&\quad\le \frac{2 \beta|V|^2(\Delta+1)}{\kappa^\star} + T_1 |V| \ln \frac{q^2}{\theta_d} 
	  +\frac{1}{\kappa^\star}\left( 2\beta^2|V|^3(\Delta+1)^2 +  K\left(|V| \ln \frac{q^2}{\theta_d} + \ln T\right)\right)\ln (T+1) 
\end{align*}
We can now obtain the bound on the overall regret, via \eqref{eq:LI_regret_decomposition}:
\begin{align}
	R^{\rm LI}_T(f^T) &\le \frac{1}{\kappa^\star}\left( 2\beta^2|V|^3(\Delta+1)^2 +  K\left(|V| \ln \frac{q^2}{\theta_d} + \ln T\right)\right)\ln (T+1) \nonumber\\
		& \qquad + 2\left(\beta|V|(\Delta+1) \right)^2\ln(T+1)	+ T_1 |V| \ln \frac{q^2}{\theta_d} + \frac{2 \beta|V|^2(\Delta+1)}{\kappa^\star}  + \ln \frac{1}{\theta}.
\end{align}
Substituting the expression for $\kappa^\star$ from Lemma~\ref{lm:Ricci_bound}, we get \eqref{eq:LI_regret_bound}, and the proof is complete.
}

\section{Conclusion}

We have studied a model of online (i.e., real-time) discrete optimization by a social network 
consisting of agents that must choose actions to balance \an{their} immediate time-varying costs against a tendency to act according to some default myopic strategy. The costs are generated by a dynamic environment, and the agents lack ability or incentive to construct an a priori model of the environment's evolution. 
The global cost of the network decomposes into a sum of individual and pairwise \an{local-}interaction terms and, at each time step, \an{every} agent is informed only about its own cost and the pairwise costs in its immediate neighborhood. These assumptions on the network and on the environment capture the so-called {\em Knightian uncertainty} \cite{Knight_book,Bewley_Knightian1,Bewley_Knightian2}. The overall objective is to minimize the worst-case regret, i.e., the difference between the cumulative real-time performance of the network and the best performance that could have been achieved in hindsight with full centralized knowledge. We have constructed an explicit strategy for the network based on the Glauber dynamics and showed that it achieves favorable scaling of the regret in terms of problem parameters under a Dobrushin-type mixing condition. Our proof uses ideas from statistical physics, as well as recent developments in the theory of Markov chains in metric spaces, specifically Ollivier's notion of positive Ricci curvature of a Markov operator \cite{Ollivier_Ricci}.

Although the notion of regret is backward-looking, it is important conceptually since it  quantifies the agents' ability to make {\em forecasts} even in the absence of a Bayesian model, and to improve their decisions over time.  From the point of view of economics, regret minimization is significant for two reasons.  First, from the positive (or descriptive) standpoint, it allows for boundedly rational agents. Second, it may be used as a basis for what Selten \cite{Selten} has called a {\em practically normative} theory of economic behavior, since the goal of minimizing regret is synonymous with using past experience to improve one's decisions in the future, as opposed to following a strategy based on ideal rational expectations independent of the environment. In addition, in the online learning framework, the model of the interaction between the social network and the environment does not rely on probability judgments or assumptions about {\em what will happen}. Rather, probability is used as a tool to help the agents decide {\em what to do} -- how to allocate priority to different actions? When to perform experimentation, and when to stick with a strategy that had performed well in the past? Thus, probability is used as an objective {\em evolutionary mechanism} for selecting an action \cite{NelsonWinter,Selten} or as a mechanism to keep track of past experience in a case-based decision framework \cite{GilboaSchmeidler_book}, rather than as a subjective {\em belief} about the environment. This viewpoint is, of course, ideally suited for a Knightian theory of decision-making, and it meshes well with post-Keynesian critiques of the use of probability to quantify uncertainty \cite{Davidson,Crotty}.

\begin{appendix}

	\renewcommand{\theequation}{\Alph{section}.\arabic{equation}}
	\renewcommand{\thelemma}{\Alph{section}.\arabic{lemma}}
	\setcounter{equation}{0}
	\setcounter{lemma}{0}

\section{Proof of Lemma~\ref{lm:Gibbs}}
\label{app:Gibbs_lemma}

All Gibbs measures $\mu_g$  induced by the same base measure $\mu$
have the same support as $\mu$. Therefore, the quantity $D(\mu_g \| \mu_h)$ is finite for all functions $g$ and $h$ on $\sX$, and
\begin{align}
	D(\mu_g \| \mu_h) &= \left\langle \mu_g, \ln \frac{\mu_g}{\mu_h}\right\rangle \nonumber \\
	&= \left\langle \mu_g, g-h\right\rangle + \ln \frac{\ave{\mu,\exp(h)}}{\ave{\mu,\exp(g)}} \nonumber \\
	&= \left\langle \mu_g, g-h\right\rangle + \ln \left\langle\mu_g, \exp(h-g)\right\rangle.\label{eq:KL_Gibbs_0}
\end{align}
We now use the well-known Hoeffding bound \cite{Hoe63}, which for our purposes can be stated as follows: for any function $F : \sX \to \Reals$ and any $\nu \in \cP(\sX)$,
\begin{align}\label{eq:Hoeffding}
	\ln \ave{\nu, \exp(F)} \le \ave{\nu, F} + \frac{\| F \|^2_{\rm s}}{8}.
\end{align}
Applying \eqref{eq:Hoeffding} to the second term in \eqref{eq:KL_Gibbs_0}, we note that the terms involving the expectation of $g-h$ with respect to $\mu_g$ cancel, and we are left with \eqref{eq:KL_Gibbs}. The bound \eqref{eq:TV_Gibbs} follows from \eqref{eq:KL_Gibbs} and the CKKP inequality \eqref{eq:CKKP}.

\setcounter{equation}{0}
\setcounter{lemma}{0}

\section{Gibbs sampler and detailed balance}
\label{app:Gibbs_sampler}

In order to keep the paper self-contained, we give a brief proof of the detailed balance property of the discrete-state Gibbs sampler \cite{Tierney_MCMC}. Consider an arbitrary everywhere positive probability measure $\pi \in \cP(\sX)$ and a random variable $X = (X_v)_{v \in V}$ with distribution $\pi$. For any $v \in V$,  the conditional probability that $X_v = x_v$ given $X_{-v} = x_{-v}$ is equal to
\begin{align*}
	\pi_v(x_v|x_{-v}) \deq \frac{\pi(x_v,x_{-v})}{\pi_{-v}(x_{-v})},
\end{align*}
where $(a,x_{-v})$ denotes the tuple $y \in \sX$ obtained from $x$ by replacing $x_v$ with $a$, i.e., $y_v = a$ and $y_{-v} = x_{-v}$, and
\begin{align*}
\pi_{-v}(x_{-v}) = \sum_{a \in \{1,\ldots,q\}} \pi(a,x_{-v}).
\end{align*}
The  {\em Gibbs sampler} is implemented as follows: starting from $x \in \sX$, pick a vertex $v \in V$ uniformly at random, replace $x_v$ with a random sample $Y_v$ from $\pi_v(\cdot|x_{-v})$, and let $Y_{-v} = x_{-v}$. The overall stochastic transformation $x \to Y$ is described by the Markov kernel
\begin{align*}
	\Prob(y|x) = \frac{1}{|V|}\sum_{v \in V} \pi_v(y_v|x_{-v})\1_{\{x_{-v}=y_{-v}\}}.
\end{align*}
Then we claim that the pair $(\pi,\Prob)$ has the detailed balance property
\begin{align*}
	\pi(x)\Prob(y|x) = \pi(y)\Prob(x|y), \qquad \forall x,y \in \sX.
\end{align*}
Indeed,
\begin{align*}
	\pi(x)\Prob(y|x) &= \frac{1}{|V|}\sum_{v \in V} \pi_v(y_v|x_{-v}) \pi(x_v,y_{-v}) \1_{\{x_{-v} = y_{-v}\}} \\
	&= \frac{1}{|V|}\sum_{v \in V} \frac{\pi(y_v,x_{-v})}{\pi_{-v}(x_{-v})} \pi(x_v,y_{-v}) \1_{\{x_{-v} = y_{-v}\}} \\
	&= \frac{1}{|V|}\sum_{v \in V} \frac{\pi(y_v,x_{-v})}{\pi_{-v}(y_{-v})} \pi(x_v,y_{-v}) \1_{\{x_{-v} = y_{-v}\}} \\
	&= \frac{1}{|V|}\sum_{v \in V} \frac{\pi(x_v,y_{-v})}{\pi_{-v}(y_{-v})}\pi(y_v,x_{-v})  \1_{\{x_{-v} = y_{-v}\}} \\
	&= \frac{1}{|V|}\sum_{v \in V} \pi_{-v}(x_v|y_{-v}) \pi(y_v,x_{-v}) \1_{\{x_{-v} = y_{-v}\}} \\
	&= \pi(y) \Prob(x|y).
\end{align*}
A simple calculation shows that when \an{$\pi=\mu_f$ for} a Gibbs measure $\mu_f$ 
\an{induced by} an everywhere positive product measure $\mu \in \cP(\sX)$ and any \an{function} 
$f \in \cF$, the conditional measure $\pi_v(\cdot|x_{-v})$ for any $v \in V$ has the form
\begin{align*}
	\pi_{-v}(\cdot|x_{-v}) \propto \mu_v(x_v)\exp\left(-f_v(\cdot,x_{\partial v})\right).
\end{align*}
This, in turn, implies the detailed balance property \eqref{eq:reversibility}.

\setcounter{equation}{0}
\setcounter{lemma}{0}

\section{A polynomial recurrence in the proof of Theorem~\ref{thm:LI_regret_bound}}
\label{app:recursion}

For each $t=1,2,\ldots$, consider the polynomial
\begin{align*}
	p_t(u) = \sum^t_{s=1} \frac{u^{t-s}}{s}.
\end{align*}
We are interested in its behavior on the interval $[0,1]$.
\begin{lemma}\label{lm:recurrence} For each $u \in [0,1)$, the sequence $\{p_t(u)\}^\infty_{t=1}$ converges to zero. Moreover, there exists a finite $t_0 = t_0(u) \in \Naturals$, such that $p_{t+1}(u) < p_t(u)$ for all $t \ge t_0$.
\end{lemma}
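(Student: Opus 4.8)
The plan is to analyze the polynomial $p_t(u) = \sum_{s=1}^t \frac{u^{t-s}}{s}$ by deriving a clean recurrence in $t$ and then studying the induced first-order dynamics. The key algebraic observation is that if I pull out the $s=t$ term and reindex the remaining sum, I get the recursion
\begin{align*}
	p_{t+1}(u) = u\, p_t(u) + \frac{1}{t+1}, \qquad t \ge 1,
\end{align*}
which one checks directly by writing $p_{t+1}(u) = \sum_{s=1}^{t+1}\frac{u^{t+1-s}}{s} = u\sum_{s=1}^t \frac{u^{t-s}}{s} + \frac{1}{t+1}$. This recurrence is the backbone of everything, since it lets me treat $p_t(u)$ as the state of a contracting affine dynamical system with a vanishing forcing term $\frac{1}{t+1}$.

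For the convergence claim, I would fix $u \in [0,1)$ and show $p_t(u) \to 0$. One route is a direct estimate: split the sum at $s = \lceil t/2 \rceil$, bound the ``old'' terms ($s$ small, so $u^{t-s}$ is at most $u^{t/2} \to 0$) by $u^{t/2}\sum_{s=1}^{t}\frac{1}{s} = O(u^{t/2}\ln t) \to 0$, and bound the ``recent'' terms ($s \ge t/2$) by $\frac{2}{t}\sum_{\tau \ge 0} u^\tau = \frac{2}{t(1-u)} \to 0$. Both halves vanish, giving $p_t(u) \to 0$. Alternatively, and perhaps more cleanly, I can argue from the recurrence: since $u < 1$, for any $\varepsilon > 0$ choose $N$ so that $\frac{1}{t+1} < \varepsilon(1-u)$ for $t \ge N$; then $p_{t+1}(u) \le u\,p_t(u) + \varepsilon(1-u)$, and iterating shows $\limsup_t p_t(u) \le \varepsilon$, hence the limit is zero.

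For the eventual monotonicity claim, the natural quantity is the forward difference $p_{t+1}(u) - p_t(u)$. Using the recurrence,
\begin{align*}
	p_{t+1}(u) - p_t(u) = (u-1)p_t(u) + \frac{1}{t+1} = -(1-u)p_t(u) + \frac{1}{t+1}.
\end{align*}
Thus $p_{t+1}(u) < p_t(u)$ precisely when $p_t(u) > \frac{1}{(1-u)(t+1)}$. Since I will have just shown $p_t(u) \to 0$, I cannot simply lower-bound $p_t$ by a constant; instead I note that $p_t(u) \ge \frac{1}{t}$ (the single $s=t$ term), which alone is not enough because $\frac{1}{t}$ versus $\frac{1}{(1-u)(t+1)}$ can go either way. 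The fix is to extract a slightly stronger lower bound by keeping the last two terms, $p_t(u) \ge \frac{1}{t} + \frac{u}{t-1} \ge \frac{1+u}{t}$, or more robustly to show that $t\,p_t(u)$ is bounded below away from $\frac{1}{1-u}$ eventually; comparing $t\,p_t(u)$ with its limit (which, from the recurrence $t\,p_t \to \frac{1}{1-u}$ can be analyzed via $a_t \deq t\,p_t(u)$) is the cleanest path.

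The main obstacle is precisely this last comparison: because $p_t(u)$ decays like $\frac{1}{t}$, both sides of the inequality $p_t(u) > \frac{1}{(1-u)(t+1)}$ are of the same order, so crude bounds do not separate them and I must control the constant in the $\frac{1}{t}$ asymptotics of $p_t(u)$. I would resolve this by setting $a_t \deq t\, p_t(u)$ and deriving from the recurrence that $a_{t+1} = \frac{t+1}{t}\,u\,a_t + 1$; a short argument shows $a_t \to \frac{1}{1-u}$ from below (so that $a_t < \frac{1}{1-u}$ eventually is false in the wrong direction — care is needed here), and combining the exact asymptotic constant with the monotonicity condition $a_t > \frac{t}{(1-u)(t+1)}$ yields the existence of a finite $t_0(u)$ beyond which $p_{t+1}(u) < p_t(u)$. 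This quantitative handle on the $\frac{1}{t}$ coefficient, rather than the qualitative convergence, is where the real work lies.
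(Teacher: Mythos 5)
Your recurrence $p_{t+1}(u) = u\,p_t(u) + \frac{1}{t+1}$ and your convergence argument are correct (the paper derives the same recurrence and reads convergence off it), and you have correctly isolated the crux of the monotonicity claim: $p_{t+1}(u) < p_t(u)$ holds exactly when $p_t(u) > \frac{1}{(1-u)(t+1)}$, so one must control the \emph{constant} in the $1/t$ decay of $p_t(u)$. But your proposed resolution of that crux rests on a false claim and does not close. Setting $a_t = t\,p_t(u)$, you assert that $a_t \to \frac{1}{1-u}$ \emph{from below}. Writing $a_t = \sum_{\tau=0}^{t-1}\frac{t}{t-\tau}u^\tau$, one gets $a_t - \frac{1}{1-u} = \sum_{\tau=1}^{t-1}\frac{\tau}{t-\tau}u^\tau - \frac{u^t}{1-u} \ge \frac{u}{t-1} - \frac{u^t}{1-u}$, which is positive for all large $t$ when $u>0$; e.g., for $u=1/2$, $a_3 = 2.5 > 2 = \frac{1}{1-u}$. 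So the limit is approached from \emph{above}. Had "from below" been true, your argument would have needed a rate comparison between two quantities, $a_t$ and $\frac{1}{1-u}\cdot\frac{t}{t+1}$, both converging to $\frac{1}{1-u}$ with $O(1/t)$ error, and nothing in your sketch supplies it --- this is precisely where you write "care is needed here" and stop. Your fallback bound $p_t(u) \ge \frac{1+u}{t}$ also fails: it reduces the monotonicity condition to $u^2 < \frac{1}{t+1}$, which is violated for all large $t$; more generally, keeping any \emph{fixed} number $k$ of trailing terms only yields the condition $u^k < \frac{1}{t+1}$, which always eventually fails.

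The repair is short and makes your route work: keep \emph{all} $t$ terms of the geometric sum, $a_t = \sum_{\tau=0}^{t-1}\frac{t}{t-\tau}u^\tau \ge \sum_{\tau=0}^{t-1}u^\tau = \frac{1-u^t}{1-u}$, so the condition $a_t > \frac{1}{1-u}\bigl(1-\frac{1}{t+1}\bigr)$ holds as soon as $u^t < \frac{1}{t+1}$, which is true for all $t \ge t_0(u)$ since $u^t$ decays exponentially. This gives $p_{t+1}(u) < p_t(u)$ for every $t \ge t_0$ in one stroke. It is worth noting that the paper argues quite differently: it first shows by induction from the recurrence that once $p_{t_0+1}(u) < p_{t_0}(u)$ holds for some $t_0$ it holds for all later times, and then establishes existence of such a $t_0$ by applying Descartes' rule of signs to the difference polynomial $q_t = p_{t+1}-p_t$ (exactly one positive root $u_t \in (0,1]$, with the $u_t$ nondecreasing in $t$). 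Your analytic route, once repaired as above, avoids both the induction step and the root-counting argument, and is arguably more self-contained; as written, however, it has a genuine gap at its central step.
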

\begin{proof} We first observe the following recurrence relation: for any $u \in [0,1]$,
	\begin{align}\label{eq:p_recurrence}
		p_{t+1}(u) = up_t(u) + \frac{1}{t+1}.
	\end{align}
From this, we see that $\lim_{t \to \infty} p_t(u) = 0$ for any $u \in [0,1)$. Let us fix some such $u$. Suppose that $p_{t_0+1}(u) < p_{t_0}(u)$ for some $t_0$. Then we claim that $p_{s+1}(u) < p_s(u)$ for all $s \ge t_0$. Indeed, from \eqref{eq:p_recurrence},
\begin{align*}
	p_{t_0+2}(u) &= up_{t_0+1}(u) + \frac{1}{t_0+2} \\
	&< up_{t_0}(u) + \frac{1}{t_0+2} \\
	&< up_{t_0}(u) + \frac{1}{t_0+1} \\
	&= p_{t_0+1}(u).
\end{align*}
The general claim of strict monotonicity then follows by induction. It remains to prove that such a finite $t_0$ always exists. To that end, consider for arbitrary $t$ the polynomial
\begin{align*}
	q_t(u) \deq p_{t+1}(u) - p_t(u) = u^{t+1} - \sum^{t-1}_{s=1} \frac{u^{t+1-s}}{s(s+1)}.
\end{align*}
The leading coefficient of $q_t$ is positive while all other coefficients are negative, so, by Descartes' rule of signs, $q_t$ has exactly one positive real root. Let us denote this root by $u_t$. We claim that $u_t \in (0,1]$. Indeed, $u_t$ must be positive, since $q_t(u)$ has a nonzero constant term. Moreover, $q_t(0) = -\frac{1}{t(t+1)}$ and $q_t(1) = \frac{1}{t+1}$, so $q_t$ changes sign in $[0,1]$. Thus, $u_t \in (0,1]$, and
\begin{align*}
	p_{t+1}(u) < p_t(u), \qquad u < u_t.
\end{align*}
By virtue of this strict monotonicity property, the sequence $\{u_t\}^\infty_{t=1}$ is strictly increasing and bounded by one. Now, for a given $u$ simply take $t_0$ to be the smallest element of the set $\{ t \in \Naturals: u_t > u \}$. 
\end{proof}

\setcounter{equation}{0}
\setcounter{lemma}{0}

\section{Miscellanea}
\label{app:misc}

\begin{lemma}\label{lm:sup_norm_bound} Consider all functions $f : \sX \to \Reals$ of the form \eqref{eq:cost_function}, where all local terms $\phi_v$ and $\phi_{u,v}$ take values in the interval $[-1,1]$. Then
	\begin{align}\label{eq:sup_norm_bound}
		\| f \|_\infty \le |V|(\Delta+1),
	\end{align}
	where $\| f \|_\infty \deq \max_{x \in \sX}|f(x)|$ is the sup norm of $f$.
Moreover, for any $x,y \in \sX$ and any $v \in V$,
	\begin{align}\label{eq:local_Lipschitz_bound}
		\left\| f_{v}(\cdot,x_{\partial v}) - f_{v}(\cdot,y_{\partial v}) \right\|_\infty \le 2\rho_{\rm H}(x_{\partial v},y_{\partial v}),
	\end{align}
	where
	\begin{align*}
		\rho_{\rm H}(x_{\partial v},y_{\partial v}) = \sum_{u \in \partial v}\1_{\{x_u \neq y_u\}}.
	\end{align*}
\end{lemma}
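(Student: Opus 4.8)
The plan is to establish the two bounds separately, each by a direct triangle-inequality estimate that combines the uniform bound $|\phi_v|,|\psi_{u,v}|\le 1$ with the combinatorial structure of the decomposition \eqref{eq:cost_function}.

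For the sup-norm bound \eqref{eq:sup_norm_bound}, I would fix an arbitrary $x\in\sX$ and apply the triangle inequality to \eqref{eq:cost_function}, obtaining
\[
	|f(x)| \le \sum_{v\in V}|\phi_v(x_v)| + \sum_{\edge{u}{v}\in E}|\psi_{u,v}(x_u,x_v)| \le |V| + |E|,
\]
since each of the $|V|$ vertex terms and $|E|$ edge terms is bounded by $1$ in absolute value. The remaining point is to control the number of edge terms: by the handshake identity $2|E| = \sum_{v\in V}|\partial v| \le |V|\Delta$, we have $|E| \le |V|\Delta/2 \le |V|\Delta$, whence $|f(x)| \le |V| + |V|\Delta = |V|(\Delta+1)$. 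Taking the maximum over $x\in\sX$ then yields \eqref{eq:sup_norm_bound}.

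For the local Lipschitz bound \eqref{eq:local_Lipschitz_bound}, I would start from the definition $f_v(a,x_{\partial v}) = \phi_v(a) + \sum_{u\in\partial v}\psi_{u,v}(x_u,a)$ and, at a fixed $a\in\{1,\ldots,q\}$, form the difference
\[
	f_v(a,x_{\partial v}) - f_v(a,y_{\partial v}) = \sum_{u\in\partial v}\big[\psi_{u,v}(x_u,a) - \psi_{u,v}(y_u,a)\big].
\]
The key observation is that the single-vertex cost $\phi_v(a)$ is independent of the boundary condition and therefore cancels, leaving only edge contributions. Moreover, for each $u\in\partial v$ the corresponding summand vanishes whenever $x_u=y_u$, and is bounded by $|\psi_{u,v}(x_u,a)|+|\psi_{u,v}(y_u,a)|\le 2$ otherwise. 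This gives $|f_v(a,x_{\partial v}) - f_v(a,y_{\partial v})| \le 2\sum_{u\in\partial v}\1_{\{x_u\neq y_u\}} = 2\rho_{\rm H}(x_{\partial v},y_{\partial v})$ for every $a$, and taking the maximum over $a$ produces \eqref{eq:local_Lipschitz_bound}.

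Both parts are elementary, so there is no genuine obstacle; the only places requiring care are (i) accounting correctly for the edge terms through the degree bound $|E|\le|V|\Delta/2$ (the stated constant $|V|(\Delta+1)$ is in fact not tight, which is harmless since we only need an upper bound), and (ii) noticing the cancellation of $\phi_v$ in the second estimate, so that only the $|\partial v|$ edge terms survive and the factor $2$ — rather than $1$ — arises from bounding a difference of two $[-1,1]$-valued edge costs.
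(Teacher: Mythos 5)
Your proof is correct and follows essentially the same route as the paper's: the triangle inequality together with $|E| \le |V|\Delta/2$ (overbounded to $|V|\Delta$) for the sup-norm estimate, and cancellation of $\phi_v$ followed by termwise bounding of each edge-cost difference by $2\1_{\{x_u \neq y_u\}}$ for the local Lipschitz estimate. The only difference is expository: you spell out the handshake identity and the cancellation of the vertex term, which the paper leaves implicit.
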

\begin{proof} For any $x \in \sX$, we have
	\begin{align*}
		|f(x)| &\le \sum_{v \in V} \left| \phi_v(x_v) \right| + \sum_{\edge{u}{v} \in E}\left|\psi_{u,v}(x_u,x_v)\right| \\
		&\le  |V| + |E|.
	\end{align*}
Since the graph $G = (V,E)$ is undirected and simple, an elementary counting argument shows that 
\[\an{|E| \le |V|\Delta/2}.\] 
Overbounding slightly, we get \eqref{eq:sup_norm_bound}. Similarly, for any $a \in \{1,\ldots,q\}$,
\begin{align*}
	\left| f_{v}(a,x_{\partial v}) - f_{v}(a,y_{\partial v}) \right| &\le \sum_{u \in \partial v} \left| \psi_{u,v}(a,x_{\partial v}) - \psi_{u,v}(a,y_{\partial v})\right| \\
	&\le 2 \sum_{u \in \partial v} \1_{\{x_u \neq y_u\}} \\
	&= 2 \rho_{\rm H}(x_{\partial v}, y_{\partial v}),
\end{align*}
which gives us \eqref{eq:local_Lipschitz_bound}.
\end{proof}

\begin{lemma}\label{lm:Lipschitz_bound} Under the same assumptions as in Lemma~\ref{lm:sup_norm_bound}, each cost function $f$ of the form \eqref{eq:cost_function} is Lipschitz with respect to the Hamming distance $\rho_{\rm H}$, with Lipschitz constant $\| f \|_{\rm Lip} \le 2|V|(\Delta+1)$. 
\end{lemma}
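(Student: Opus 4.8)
The plan is to deduce the global Lipschitz bound directly from the sup-norm estimate \eqref{eq:sup_norm_bound} already established in Lemma~\ref{lm:sup_norm_bound}, rather than analyzing coordinate differences from scratch. The key observation I would exploit is that the Hamming distance satisfies $\rho_{\rm H}(x,y) \ge 1$ for every pair $x \neq y$, since two distinct profiles must differ in at least one coordinate. Consequently, for any $x \neq y$,
\[
\frac{|f(x) - f(y)|}{\rho_{\rm H}(x,y)} \le |f(x) - f(y)| \le |f(x)| + |f(y)| \le 2\|f\|_\infty,
\]
where the first inequality uses $\rho_{\rm H}(x,y) \ge 1$ and the last is the triangle inequality for the absolute value. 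Taking the supremum over all $x \neq y$ in the definition of $\|f\|_{\rm Lip}$ and then invoking \eqref{eq:sup_norm_bound} yields $\|f\|_{\rm Lip} \le 2\|f\|_\infty \le 2|V|(\Delta+1)$, which is exactly the claimed constant.

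I would then note, as an aside, that this argument is deliberately wasteful but convenient, and that a sharper bound is available through the path-coupling idea used elsewhere in the paper. Indeed, any $x,y$ with $\rho_{\rm H}(x,y)=k$ are joined by a chain $x = x_1,\ldots,x_{k+1}=y$ whose consecutive elements differ in a single coordinate, so the triangle inequality reduces matters to the case $\rho_{\rm H}(x,y)=1$. When $x$ and $y$ differ only at a vertex $w$, all terms of \eqref{eq:cost_function} cancel except $\phi_w$ and the $\psi_{u,w}$ for $u \in \partial w$, giving $|f(x)-f(y)| \le 2(1+|\partial w|) \le 2(\Delta+1)$ and hence the tighter constant $2(\Delta+1)$. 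Since Theorem~\ref{thm:LI_regret_bound} only requires polynomial dependence on the network parameters, however, I would keep the crude bound $2|V|(\Delta+1)$ so as to match the constant actually used in the proof of the theorem.

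There is essentially no obstacle in this lemma; the only point deserving care is the normalization of the Hamming metric, namely that $\rho_{\rm H}$ is integer-valued and bounded below by $1$ on distinct profiles, which is precisely what lets the quotient $|f(x)-f(y)|/\rho_{\rm H}(x,y)$ be controlled by the sup norm alone.
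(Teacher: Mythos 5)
Your proof is correct, but it follows a genuinely different route from the paper's. You deduce the bound globally: since $\rho_{\rm H}$ is integer-valued, $\rho_{\rm H}(x,y) \ge 1$ for $x \neq y$, so $|f(x)-f(y)|/\rho_{\rm H}(x,y) \le 2\|f\|_\infty \le 2|V|(\Delta+1)$ by \eqref{eq:sup_norm_bound}; this is valid and is the shortest path to the stated constant, at the price of discarding all locality. The paper instead argues coordinate-wise from \eqref{eq:cost_function}: it bounds $|\phi_v(x_v)-\phi_v(y_v)| \le 2\,\1_{\{x_v \neq y_v\}}$ and $|\psi_{u,v}(x_u,x_v)-\psi_{u,v}(y_u,y_v)| \le 2\big(\1_{\{x_u \neq y_u\}} + \1_{\{x_v \neq y_v\}}\big)$, sums these indicator bounds, and then overbounds the resulting $|V|+2|E|$ indicators by $(|V|+2|E|)\rho_{\rm H}(x,y) \le |V|(\Delta+1)\rho_{\rm H}(x,y)$ using $|E| \le |V|\Delta/2$. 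The paper's decomposition tracks which coordinates actually differ and is the natural argument for a locally decomposable cost, but its final overbounding step is itself crude, so both proofs land on the same loose constant $2|V|(\Delta+1)$. Your aside is also correct and worth noting: the single-coordinate (path-coupling style) analysis gives the sharper constant $2(\Delta+1)$ --- indeed the paper's own intermediate inequality would yield this if one bounded the edge sum by $\Delta\,\rho_{\rm H}(x,y)$ instead of the crude step --- and your choice to keep $2|V|(\Delta+1)$ so as to match the constant invoked in the proof of Theorem~\ref{thm:LI_regret_bound} is sensible.
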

\begin{proof} For any two $x,y \in \sX$, we have
	\begin{align*}
		\left|f(x) - f(y)\right| &\le \sum_{v \in V}\left|\phi_v(x_v) - \phi_v(y_v)\right| + \sum_{\edge{u}{v} \in E}\left|\psi_{u,v}(x_u,x_v) - \psi_{u,v}(y_u,y_v)\right| \\
		&\le 2\left\{\sum_{v \in V} \1_{\{x_v \neq y_v\}} +\sum_{\edge{u}{v} \in E} \1_{\{(x_u,x_v) \neq (y_u,y_v)\}}\right\} \\
		&\le 2\left\{\sum_{v \in V} \1_{\{x_v \neq y_v\}} + \sum_{\edge{u}{v} \in E} \left( \1_{\{x_u \neq y_u\}} + \1_{\{ x_v \neq y_v \}} \right)\right\} \\
		&\le 2(|V|+2|E|)\sum_{v \in V}\1_{\{x_v \neq y_v\}} \\
		&= 2|V|(\Delta+1) \rho_{\rm H}(x,y).
	\end{align*}
\end{proof}

\end{appendix}

\end{document}